\newtheorem{thrm}{Theorem}[section]
\newtheorem{lem}[thrm]{Lemma}
\newtheorem{prop}[thrm]{Proposition}
\theoremstyle{definition}
\newtheorem{exm}[thrm]{Example}
\newtheorem{rem}[thrm]{Remark}
\renewcommand{\iff}{\Leftrightarrow}
\newcommand{\impl}{\Rightarrow}
\newcommand{\id}{\mathrm{id}}
\newcommand{\cI}[1]{\mathcal I{(#1)}}
\newcommand{\cS}[1]{\mathcal S{(#1)}}
\newcommand{\cG}[1]{\mathcal G{(#1)}}
\newcommand{\Sg}[1]{\Sigma{(#1)}}
\newcommand{\cR}{\mathcal R}
\newcommand{\dom}[1]{\operatorname{\mathrm{dom}}{#1}}
\newcommand{\ran}[1]{\operatorname{\mathrm{ran}}{#1}}
\newcommand{\m}{{}^{-1}}
\newcommand{\dt}{\delta}
\newcommand{\io}{\iota}
\newcommand{\af}{\alpha}
\newcommand{\tl}{\tilde}
\newcommand{\sst}{\subseteq}
\newcommand{\mt}{\wedge}
\newcommand{\jn}{\vee}
\newcommand{\Jn}{\bigvee}
\newcommand{\rt}{\rtimes}
\begin{document}
	
	\title{Partial actions and an embedding theorem for inverse semigroups}
	
	\author{Mykola Khrypchenko}
	\address{Departamento de Matem\'atica, Universidade Federal de Santa Catarina, Campus Reitor Jo\~ao David Ferreira Lima, Florian\'opolis, SC,  CEP: 88040--900, Brazil}
	\email{nskhripchenko@gmail.com}
	
	\subjclass[2010]{Primary 20M18, 20M30; Secondary 20M15.}
	\keywords{Inverse semigroup, premorphism, idempotent pure congruence}
	
	\thanks{Supported by FAPESP of Brazil (process number: 2012/01554--7)}

	\begin{abstract}
     We give a simple construction involving partial actions which permits us to obtain an easy proof of a weakened version of L.~O'Carroll's theorem on idempotent pure extensions of inverse semigroups.
	\end{abstract}
	
	\maketitle
	
\section*{Introduction}

It was proved by D.~B.~McAlister in~\cite[Theorem 2.6]{McAlister74-II} that, up to an isomorphism, each $E$-unitary inverse semigroup is of the form $P(G,X,Y)$ for some McAlister triple $(G,X,Y)$. There are several alternative proofs of this fact (see, for example, the survey~\cite{Lawson-Margolis-P-th}), in particular, the one given in~\cite{KL} uses partial actions of groups on semilattices.

L. O'Carroll generalized in~\cite[Theorem 4]{O'Carroll77} (which is a reformulation of~\cite[Theorem 2.11]{O'Carroll75}) McAlister's $P$-theorem by showing that for any inverse semigroup $S$ and an idempotent pure congruence $\rho$ on $S$ there is a fully strict $L$-triple $(T,X,Y)$ with $T\cong S/\rho$ and $Y\cong E(S)$, such that $S\cong L_m(T,X,Y)$. This gives rise to a categorical equivalence as shown in \cite[Theorems~1.5, 4.1 and 4.4]{Lawson96}. Moreover, when $S$ is $E$-unitary and $\rho$ is the minimum group congruence on $S$, this coincides with McAlister's description of $S$ as $P(G,X,Y)$.

In this short note we prove the following weakened version of~\cite[Theorem~4]{O'Carroll77}: given an inverse semigroup $S$ and an idempotent pure congruence $\rho$ on $S$, there exists a fully strict partial action $\tau$ of $S/\rho$ on $E(S)$, such that $S$ embeds into $E(S)\rt_\tau(S/\rho)$. Moreover, the image of $S$ in $E(S)\rt_\tau(S/\rho)$ coincides with a subsemigroup $E(S)\rt_\tau^m(S/\rho)$ of $E(S)\rt_\tau(S/\rho)$ which can be explicitly described. Our approach permits one to avoid the necessity of the poset $X$ in O'Carroll's $L$-triple. On the other hand, this weakens the result, as the partial action $\tau$ that we construct may in fact be non-globalizable.

\section{Preliminaries}

Given an inverse semigroup $S$, we use the standard notation $\le$ for the natural partial order on $S$, and $\mt$, $\jn$ for the corresponding meet and join. Notice that $s\mt t$ or $s\jn t$ may not exist in $S$ in general. However, if $e$ and $f$ are idempotents of $S$, then $e\mt f$ exists and coincides with $ef$, which is again an idempotent. Thus, the subset of idempotents of $S$, denoted by $E(S)$, forms a (meet) semilattice with respect to $\mt$. 

A congruence $\rho$ on $S$ is said to be {\it idempotent pure}, if 
\begin{align*}
(e,s)\in\rho\ \&\ e\in E(S)\impl s\in E(S).
\end{align*}
The semigroup $S$ is called {\it $E$-unitary}, if its \emph{minimum group congruence} $\sigma$, defined by 
\begin{align*}
(s,t)\in\sigma\iff\exists u\le s,t,
\end{align*}
is idempotent pure. An {\it $F$-inverse monoid} is an inverse semigroup, in which every $\sigma$-class has a maximum element under $\le$. Each $F$-inverse monoid is $E$-unitary (see~\cite[Proposition 7.1.3]{Lawson}).

Given a set $X$, by $\cI X$ we denote the \emph{symmetric inverse monoid} on $X$. The elements of $\cI X$ are the partial bijections of $X$, and the product $fg$ of $f,g\in\cI X$ is the composition, i.e.
$$
fg:g\m(\dom f\cap\ran g)\to f(\dom f\cap\ran g),\ (fg)(x)=f(g(x)).
$$
Observe using~\cite[Proposition 1.1.4 (3)]{Lawson} that $f\le g$ in $\cI X$ if and only if $f\sst g$ regarded as subsets of $X\times X$. It follows that
\begin{align}\label{exists-Join-f_i-in-I(X)}
\mbox{there exists } \Jn_{i\in I}f_i\mbox{ in }\cI X\iff \bigcup_{i\in I} f_i\in \cI X,\mbox{ in which case }\Jn_{i\in I}f_i=\bigcup_{i\in I} f_i.
\end{align}
Here $\cup$ is the union of functions as subsets of $X\times X$.

Let $S$ and $T$ be inverse semigroups. Recall from~\cite{KL} that a map $\tau:S\to T$ is called a {\it premorphism}\footnote{In~\cite{Petrich} such a map is called a {\it prehomomorphism}, and in~\cite{LMS,St2001} it is called a {\it dual prehomomorphism}.}, if
\begin{enumerate}
	\item $\tau(s\m)=\tau(s)\m$;\label{tau(s-inv)=tau(s)-inv}
	\item $\tau(s)\tau(t)\le\tau(st)$.\label{tau(s)tau(t)<=tau(st)}
\end{enumerate}
One can easily show that 
\begin{align}\label{tau(E(S))-sst-E(T)}
\tau(E(S))\subseteq E(T)
\end{align}
(see, for example, \cite{St2001}). By a {\it partial action} of an inverse semigroup $S$ on a set $X$ we mean a premorphism $\tau:S\to\cI X$. If $\tau$ is a homomorphism, then we say that the partial action $\tau$ is {\it global}. We use the notation $\tau_s$ for the partial bijection $\tau(s)$. 

\section{Construction}

We fist make several observations. By \cite[Proposition 2.4.5]{Lawson} a congruence $\rho$ on $S$ is idempotent pure if and only if
\begin{align}\label{rho-sst-sim}
\rho\sst\sim,
\end{align}
where $\sim$ is the \emph{compatibility relation} on $S$ defined by 
\begin{align}\label{s-sim-t<=>s^(-1)t-and-st^(-1)-in-E(S)}
s\sim t\iff s\m t,st\m\in E(S).
\end{align}
Moreover, $\sim\sst\sigma$ by \cite[Theorem 2.4.1 (1)]{Lawson}, whence
\begin{align}\label{rho-sst-sigma}
\rho\sst\sigma
\end{align}
for each idempotent pure congruence $\rho$ on $S$.

We would also like to note that, given a partial action $\tau$ of $S$ on $X$ and $s\in S$, then by \ref{tau(s-inv)=tau(s)-inv} and \ref{tau(s)tau(t)<=tau(st)}
\begin{align}\label{id_ran-tau_s-sst-tau_ss^(-1)}
\id_{\ran{\tau_s}}=\tau_s\tau\m_s=\tau_s\tau_{s\m}\le\tau_{ss\m}.
\end{align}

\begin{lem}\label{corr-of-tau-tilde}
	Let $S$ be an inverse semigroup, $X$ a set, $\rho$ an idempotent pure congruence on $S$ and $\tau$ a partial action of $S$ on $X$. Then for any $\rho$-class $[s]\in S/\rho$ the join $\Jn_{t\in[s]}\tau_t$ exists in $\cI X$.
\end{lem}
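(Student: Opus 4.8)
The plan is to reduce the existence of the join to a compatibility check and then invoke \eqref{exists-Join-f_i-in-I(X)}. By that criterion, $\Jn_{t\in[s]}\tau_t$ exists in $\cI X$ precisely when $\bigcup_{t\in[s]}\tau_t$ is again a partial bijection, so it suffices to show that the union of the $\tau_t$, $t\in[s]$, regarded as subsets of $X\times X$, is both functional and injective. I would first observe that this holds as soon as the family $\{\tau_t : t\in[s]\}$ is \emph{pairwise} compatible in $\cI X$, i.e. $\tau_t\m\tau_{t'},\ \tau_t\tau_{t'}\m\in E(\cI X)$ for all $t,t'\in[s]$: any potential failure of functionality or injectivity of the union involves only two members of the family at a time, so pairwise compatibility already forces the union into $\cI X$.

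So first I would fix $t,t'\in[s]$ and transfer the compatibility that already holds in $S$ over to $\cI X$. Since $(t,t')\in\rho$ and $\rho$ is idempotent pure, \eqref{rho-sst-sim} gives $t\sim t'$ in $S$, that is $t\m t',\,tt'\m\in E(S)$ by \eqref{s-sim-t<=>s^(-1)t-and-st^(-1)-in-E(S)}. Applying the premorphism axioms \ref{tau(s-inv)=tau(s)-inv} and \ref{tau(s)tau(t)<=tau(st)} then yields $\tau_t\m\tau_{t'}=\tau_{t\m}\tau_{t'}\le\tau_{t\m t'}$ and, symmetrically, $\tau_t\tau_{t'}\m\le\tau_{tt'\m}$, while \eqref{tau(E(S))-sst-E(T)} puts both $\tau_{t\m t'}$ and $\tau_{tt'\m}$ into $E(\cI X)$.

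The key step is then to use that the idempotents of any inverse semigroup form an order ideal: an element below an idempotent is itself idempotent, since $a\le e$ with $e\in E$ gives $a=ea\m a$, a product of commuting idempotents. Applying this in $\cI X$ shows $\tau_t\m\tau_{t'},\,\tau_t\tau_{t'}\m\in E(\cI X)$, i.e. $\tau_t\sim\tau_{t'}$ in $\cI X$. As $t,t'$ were arbitrary members of $[s]$, the family $\{\tau_t : t\in[s]\}$ is pairwise compatible, so $\bigcup_{t\in[s]}\tau_t\in\cI X$ and \eqref{exists-Join-f_i-in-I(X)} delivers the join $\Jn_{t\in[s]}\tau_t=\bigcup_{t\in[s]}\tau_t$.

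I expect the main obstacle to be the opening reduction, namely justifying carefully that pairwise compatibility of the partial bijections $\tau_t$ genuinely suffices for the whole union to be a single partial bijection, rather than requiring some stronger simultaneous condition; this rests on the elementary but essential remark that any clash in the union is witnessed by just two of the functions. Once this is in place, the remainder is a direct combination of the premorphism inequalities with the order-ideal property of $E(\cI X)$, and requires no further delicate computation.
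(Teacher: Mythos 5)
Your proof is correct and follows essentially the same route as the paper: idempotent purity gives $t\sim t'$ in $S$, the premorphism axioms transfer this to compatibility of $\tau_t$ and $\tau_{t'}$ in $\cI X$ (using that elements below idempotents are idempotents), and pairwise compatibility then makes $\bigcup_{t\in[s]}\tau_t$ a partial bijection, so that \eqref{exists-Join-f_i-in-I(X)} applies. The only difference is cosmetic: where you verify from first principles that pairwise compatibility suffices for the union and that $E(\cI X)$ is an order ideal, the paper simply cites Proposition 1.2.1 (2) and (3) of Lawson's book for these facts.
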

\begin{proof}
	Let $(s,t)\in\rho$. Then by \ref{tau(s-inv)=tau(s)-inv} and \ref{tau(s)tau(t)<=tau(st)} one has that $\tau_s\tau\m_t=\tau_s\tau_{t\m}\le\tau_{st\m}$ and similarly $\tau\m_s\tau_t\le\tau_{s\m t}$. Since $\tau_{st\m},\tau_{s\m t}\in E(\cI X)$ thanks to~\eqref{tau(E(S))-sst-E(T)},~\eqref{rho-sst-sim} and~\eqref{s-sim-t<=>s^(-1)t-and-st^(-1)-in-E(S)}, it follows from~\cite[Proposition~1.2.1~(2)]{Lawson} that $\tau_s\cup\tau_t\in \cI X$. It remains to apply~\cite[Proposition~1.2.1~(3)]{Lawson} and~\eqref{exists-Join-f_i-in-I(X)}.
\end{proof}

\begin{lem}\label{tilde-tau-pact}
	Under the conditions of Lemma~\ref{corr-of-tau-tilde} define $\tl\tau:S/\rho\to\cI X$, $[s]\mapsto\tl\tau_{[s]}$, by
	\begin{align}\label{tl-tau-formula}
	\tl\tau_{[s]}=\Jn_{t\in[s]}\tau_t.
	\end{align}
	Then $\tl\tau$ is a partial action of $S/\rho$ on $X$.
\end{lem}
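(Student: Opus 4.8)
The plan is to verify that $\tl\tau$ satisfies the two defining conditions \ref{tau(s-inv)=tau(s)-inv} and \ref{tau(s)tau(t)<=tau(st)} of a premorphism. Before doing so, I would record two facts about the congruence $\rho$ that make the combinatorics work. Since $\rho$ is a congruence on an inverse semigroup, it is compatible with both inversion and multiplication, so that $[s]\m=[s\m]=\{t\m:t\in[s]\}$ and $[s][t]\sst[st]$, i.e.\ $ab\in[st]$ whenever $a\in[s]$ and $b\in[t]$. The second recurring tool is \eqref{exists-Join-f_i-in-I(X)}: every join appearing below exists in $\cI X$ by Lemma~\ref{corr-of-tau-tilde} and is literally the set-theoretic union of the corresponding partial bijections, viewed as subsets of $X\times X$. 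This reduces everything to manipulations with unions of relations, where distributivity is transparent.

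For condition \ref{tau(s-inv)=tau(s)-inv}, I would compute $\tl\tau_{[s\m]}=\Jn_{t\in[s]}\tau_{t\m}=\Jn_{t\in[s]}\tau_t\m$, using $[s\m]=\{t\m:t\in[s]\}$ and condition \ref{tau(s-inv)=tau(s)-inv} for the original partial action $\tau$. Rewriting the join as a union and using that inversion of a partial bijection is just the transpose relation, which commutes with unions, gives
$$\tl\tau_{[s\m]}=\bigcup_{t\in[s]}\tau_t\m=\Bigl(\bigcup_{t\in[s]}\tau_t\Bigr)\m=\tl\tau_{[s]}\m,$$
as required.

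For condition \ref{tau(s)tau(t)<=tau(st)}, I would expand the product of the two joins as a product of unions. Since the multiplication in $\cI X$ is composition of partial bijections, which coincides with composition of binary relations, and relation composition distributes over arbitrary unions in both arguments, one gets $\tl\tau_{[s]}\tl\tau_{[t]}=\bigcup_{a\in[s],\,b\in[t]}\tau_a\tau_b$. Now for each such pair condition \ref{tau(s)tau(t)<=tau(st)} for $\tau$ yields $\tau_a\tau_b\le\tau_{ab}$, i.e.\ $\tau_a\tau_b\sst\tau_{ab}$, while $ab\in[st]$ forces $\tau_{ab}\sst\Jn_{c\in[st]}\tau_c=\tl\tau_{[st]}$. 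Taking the union over all pairs gives $\tl\tau_{[s]}\tl\tau_{[t]}\sst\tl\tau_{[st]}$, which is exactly $\tl\tau_{[s]}\tl\tau_{[t]}\le\tl\tau_{[st]}$ by the order characterization in $\cI X$.

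The main obstacle, or rather the one point deserving genuine care, is the distributivity step in condition \ref{tau(s)tau(t)<=tau(st)}: one must justify that the product of the two joins equals the join of the pairwise products. This is where \eqref{exists-Join-f_i-in-I(X)} is essential, since it lets me replace the abstract semigroup joins by unions and then invoke the elementary distributivity of relational composition over unions; without it one would have to argue directly about distributivity of multiplication over infinite joins in an inverse semigroup. I would also keep in mind the implicit bookkeeping that every join used exists by Lemma~\ref{corr-of-tau-tilde}, so that all the expressions above indeed denote elements of $\cI X$.
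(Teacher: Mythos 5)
Your proof is correct and takes essentially the same approach as the paper: well-definedness comes from Lemma~\ref{corr-of-tau-tilde}, and the two premorphism conditions are verified by distributing inversion and multiplication over the joins in $\cI X$, using $[s\m]=\{t\m: t\in[s]\}$ and $ab\in[st]$ exactly as the paper does. The only difference is one of citation versus elementary verification: the paper invokes \cite[Proposition 1.2.1 (4), (5)]{Lawson} for the distributivity of inversion and of multiplication over existing joins, whereas you derive these facts directly from~\eqref{exists-Join-f_i-in-I(X)} by treating the joins as unions of relations.
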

\begin{proof}
	By Lemma~\ref{corr-of-tau-tilde} the map $\tl\tau$ is well defined. Using \cite[Proposition 1.2.1 (4)]{Lawson} and \ref{tau(s-inv)=tau(s)-inv}, we obtain that
	\begin{align*}
	\tl\tau\m_{[s]}=\Jn_{t\in[s]}\tau\m_t=\Jn_{t\in[s]}\tau_{t\m}=\Jn_{t\in[s\m]}\tau_t=\tl\tau_{[s\m]}=\tl\tau_{[s]\m}.
	\end{align*}
	Moreover, applying \cite[Proposition 1.2.1 (5)]{Lawson} and \ref{tau(s)tau(t)<=tau(st)}, we conclude that
	\begin{align*}
	\tl\tau_{[s]}\tl\tau_{[t]}&=\left(\Jn_{s'\in[s]}\tau_{s'}\right)\left(\Jn_{t'\in[t]}\tau_{t'}\right)=\Jn_{s'\in[s]}\tau_{s'}\left(\Jn_{t'\in[t]}\tau_{t'}\right)=\Jn_{s'\in[s],\ t'\in[t]}\tau_{s'}\tau_{t'}\\
	&\le\Jn_{s'\in[s],\ t'\in[t]}\tau_{s't'}\le\Jn_{u\in[st]}\tau_u=\tl\tau_{[st]}=\tl\tau_{[s][t]}.
	\end{align*}
\end{proof}

\begin{rem}\label{S-E-unitary-rho=sigma}
	If, under the conditions of Lemma~\ref{tilde-tau-pact}, $S$ is $E$-unitary, $\rho=\sigma$ and $\bigcup_{e\in E(S)}\dom{\tau_e}=X$, then $\tl\tau$ is a partial action~\cite{E1,KL} of the maximum group image $\cG S=S/\sigma$ of $S$ on $X$  as in~\cite[Theorem 1.2]{St2001}. 
\end{rem}

\begin{rem}\label{S-F-inverse-rho=sigma}
	If, under the conditions of Lemma~\ref{tilde-tau-pact}, $S$ is an $F$-inverse monoid, $\tau$ is global and $\rho=\sigma$, then $\tl\tau_{[s]}=\tau_{\max[s]}$. In particular, if $G$ is a group, $S$ is the monoid $\cS G$ from~\cite{E1}\footnote{Notice that $\cS G$ coincides with Birget-Rhodes prefix expansion of $G$ (see~\cite{Birget-Rhodes89,Szendrei89})} and $\tau_{1_S}=\id _X$, then identifying $\cG S$ with $G$, we see that $\tl\tau$ is the partial action of $G$ on $X$ induced by $\tau$ as in~\cite[Theorem 4.2]{E1}.
\end{rem}

Let $(E,\le)$ be a meet semilattice, i.e. a partially ordered set, in which any pair of elements $e,f\in E$ has a meet $e\mt f$. Then $(E,\mt)$ is a commutative semigroup, and all its elements are idempotents. It follows that $E$ is inverse, and the natural partial order on $E$ coincides with $\le$. The converse is also true: each commutative inverse semigroup $S$, such that $E(S)=S$, is a meet semilattice, where $e\mt f=ef$ for all $e,f\in S$ (see~\cite[Proposition 1.4.9]{Lawson}). Notice that an order ideal of $(E,\le)$ is a semigroup ideal of $(E,\mt)$, an order isomorphism $(E_1,\le_1)\to(E_2,\le_2)$ is a semigroup isomorphism $(E_1,\mt_1)\to(E_2,\mt_2)$, and {\it vice versa}, so we may use the terms `ideal' and `isomorphism' in both senses.

Following~\cite{Petrich} for any meet semilattice $E$ we denote by $\Sg E$ the inverse subsemigroup of $\cI E$ consisting of the isomorphisms between ideals of $E$. Given an inverse semigroup $S$, by a {\it partial action $\tau$ of $S$ on $E$} we mean a premorphism $\tau:S\to\Sg E$. For any such $\tau$ set 
\begin{align}\label{E-rt-S-defn}
E\rt_\tau S=\{(e,s)\in E\times S\mid e\in\ran{\tau_s}\}
\end{align}
with the multiplication 
\begin{align}\label{(e_s)(e_t)-defn}
(e,s)(f,t)=(\tau_s(\tau\m_s(e)\mt f),st).
\end{align} 
Since $\tau_s(\tau\m_s(e)\mt f)\in\tau_s(\dom{\tau_s}\cap\ran{\tau_t})=\ran{(\tau_s\tau_t)}\subseteq\ran{\tau_{st}}$, the product~\eqref{(e_s)(e_t)-defn} is well defined.

\begin{lem}\label{L(E-tau-S)-inv-sem}
	The set $E\rt_\tau S$ is an inverse semigroup with respect to the operation~\eqref{(e_s)(e_t)-defn}. Its semilattice of idempotents is $\{(e,f)\in E\rt_\tau S\mid f\in E(S)\}$ and $(e,s)\m=(\tau\m_s(e),s\m)$ for every $(e,s)\in E\rt_\tau S$.
\end{lem}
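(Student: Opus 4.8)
The plan is to verify directly that $E\rt_\tau S$ is an inverse semigroup by checking associativity, identifying the idempotents, and then producing an inverse for each element via the stated formula $(\tau\m_s(e),s\m)$. I would organize the proof around these three tasks, handling associativity first since it is the computational heart of the argument, then reading off the idempotents, and finally confirming the inverse formula.

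First I would prove associativity of the multiplication~\eqref{(e_s)(e_t)-defn}. The natural approach is to expand both $\bigl((e,s)(f,t)\bigr)(g,u)$ and $(e,s)\bigl((f,t)(g,u)\bigr)$ and show the first coordinates agree (the second coordinates agree trivially because $S$ is associative). The key tool will be the premorphism inequality~\ref{tau(s)tau(t)<=tau(st)} together with the fact that partial bijections in $\Sg E$ respect meets: for $\phi\in\Sg E$ one has $\phi(a\mt b)=\phi(a)\mt\phi(b)$ whenever $a,b$ lie in the domain. Because $\tau_s\tau_t\le\tau_{st}$ means $\tau_{st}$ \emph{extends} $\tau_s\tau_t$ as a subset of $E\times E$, one can freely replace $\tau_{st}$ by $\tau_s\tau_t$ on the relevant domain, and conversely; this interchange is what makes the two nested products collapse to the same expression. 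I expect this to be the main obstacle, since one must track carefully on which ideal each composite is defined and use that the elements actually lie in the correct ranges (guaranteed by the well-definedness computation already given in the excerpt).

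Next I would identify the idempotents. An element $(e,s)$ satisfies $(e,s)(e,s)=(e,s)$ precisely when $s=s^2$ and the first coordinate is preserved; since $s=s^2$ in an inverse semigroup forces $s\in E(S)$, and conversely when $s=f\in E(S)$ the map $\tau_f$ is a restriction of the identity (an idempotent of $\cI E$, by~\eqref{tau(E(S))-sst-E(T)}), a short check gives $(e,f)(e,f)=(e,f)$. Thus the idempotent set is exactly $\{(e,f)\mid f\in E(S)\}$, and I would verify these elements commute using that idempotents of $S$ commute and that the $\tau_f$ act as partial identities, so that $E\rt_\tau S$ has a commuting set of idempotents.

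Finally I would confirm that $(\tau\m_s(e),s\m)$ is a well-defined element of $E\rt_\tau S$ (noting $\tau\m_s(e)\in\dom{\tau_s}=\ran{\tau_{s\m}}$, using~\ref{tau(s-inv)=tau(s)-inv}) and that it is an inverse of $(e,s)$ by computing both $(e,s)(\tau\m_s(e),s\m)$ and $(\tau\m_s(e),s\m)(e,s)$ and checking each yields an idempotent with the product identities $(e,s)(\tau\m_s(e),s\m)(e,s)=(e,s)$ and symmetrically. Once an inverse is exhibited and the idempotents are shown to commute, the standard characterization (a regular semigroup with commuting idempotents is inverse) completes the proof; the uniqueness of the inverse then justifies writing $(e,s)\m=(\tau\m_s(e),s\m)$.
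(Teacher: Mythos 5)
The paper gives no argument for this lemma at all: its entire ``proof'' is a citation to Petrich's Lemmas VI.7.6--7, where this type of verification is carried out. Your proposal is therefore a self-contained reconstruction of what that citation hides, and its structure is correct: associativity, identification of the idempotents, and the mutual-inverse computation, capped off by the standard fact that a regular semigroup with commuting idempotents is inverse. The only substantive work is the step you yourself flag, associativity, and it does go through with the tools you name, provided the domain-tracking is done honestly: since $\tau_s\tau_t\le\tau_{st}$ only permits replacing $\tau_{st}$ by $\tau_s\tau_t$ \emph{on} $\dom{(\tau_s\tau_t)}$, one must check that the relevant arguments lie in that smaller ideal. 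Concretely, with $a=\tau\m_s(e)\mt f$ and $b=\tau\m_t(f)\mt g$, the first coordinate of $\bigl((e,s)(f,t)\bigr)(g,u)$ reduces to $\tau_{st}(\tau\m_t(a)\mt g)$, because $\tau_s(a)\in\ran{(\tau_s\tau_t)}$ forces $\tau\m_{st}(\tau_s(a))=\tau\m_t(a)$; then $\tau\m_t(a)\mt g$ lies in the ideal $\dom{(\tau_s\tau_t)}$ (it sits below $\tau\m_t(a)$, and $\tau_s\tau_t\in\Sg E$), so this equals $\tau_s\bigl(\tau_t(\tau\m_t(a)\mt g)\bigr)$, and by injectivity of $\tau_s$ it remains to check $\tau_t(\tau\m_t(a)\mt g)=\tau\m_s(e)\mt\tau_t(b)$, which follows from a two-sided order argument using that $\tau_t$ is an order isomorphism between ideals of $E$ and that $f\in\ran{\tau_t}$. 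The remaining steps are routine exactly as you describe: each $\tau_f$ with $f\in E(S)$ is a partial identity by~\eqref{tau(E(S))-sst-E(T)}, so the idempotents are precisely the pairs $(e,f)$ with $f\in E(S)$ and they commute, and $(e,s)$, $(\tau\m_s(e),s\m)$ are mutually inverse by direct computation (the intermediate products being $(e,ss\m)$ and $(\tau\m_s(e),s\m s)$, whose first components are fixed by the partial identities $\tau_{ss\m}$, $\tau_{s\m s}$). What the paper's citation buys is brevity; what your version buys is a proof checkable entirely within the paper's own notation, at the cost of carrying out the associativity bookkeeping above in full.
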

\begin{proof}
	See~\cite[Lemmas VI.7.6--7]{Petrich}.
\end{proof}

\begin{lem}\label{L(E-tau-S)-mod-tilde-rho-isom}
	Let $\tau$ be a partial action of an inverse semigroup $S$ on a meet semilattice $E$ and $\rho$ an idempotent pure congruence on $S$. Then $\tl\tau$ is a partial action of $S/\rho$ on $E$.
\end{lem}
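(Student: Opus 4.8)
The plan is to build on Lemma~\ref{tilde-tau-pact}: applied with $X=E$, it already guarantees that $\tl\tau\colon S/\rho\to\cI E$ is a premorphism. Hence the only thing left to verify is that the image of $\tl\tau$ lands inside the inverse subsemigroup $\Sg E\sst\cI E$, that is, that each $\tl\tau_{[s]}$ is an isomorphism between ideals of $E$. Because the natural partial order on an inverse subsemigroup is the restriction of the order on the ambient inverse semigroup, and products in $\Sg E$ agree with those in $\cI E$, conditions \ref{tau(s-inv)=tau(s)-inv} and \ref{tau(s)tau(t)<=tau(st)} for $\tl\tau$ viewed as a map into $\Sg E$ will follow immediately from the ones already supplied by Lemma~\ref{tilde-tau-pact}.

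First I would use~\eqref{exists-Join-f_i-in-I(X)} to write $\tl\tau_{[s]}=\bigcup_{t\in[s]}\tau_t$ as a subset of $E\times E$, so that $\dom{\tl\tau_{[s]}}=\bigcup_{t\in[s]}\dom{\tau_t}$ and $\ran{\tl\tau_{[s]}}=\bigcup_{t\in[s]}\ran{\tau_t}$. As each $\tau_t$ belongs to $\Sg E$, its domain and range are ideals of $E$; since a union of order ideals is again an order ideal, both $\dom{\tl\tau_{[s]}}$ and $\ran{\tl\tau_{[s]}}$ are ideals of $E$. I would also record that for $t\in[s]$ and $e\in\dom{\tau_t}$ one has $\tl\tau_{[s]}(e)=\tau_t(e)$, simply because $\tl\tau_{[s]}$ is a single-valued function containing $\tau_t$.

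It then remains to check that $\tl\tau_{[s]}$ is an order isomorphism of these two ideals. The crucial observation is that whenever $e\le f$ with $e,f\in\dom{\tl\tau_{[s]}}$, the larger element $f$ lies in a single $\dom{\tau_t}$ with $t\in[s]$, and since $\dom{\tau_t}$ is an ideal the smaller element $e$ lies there too; as $\tau_t$ is order-preserving, $\tl\tau_{[s]}(e)=\tau_t(e)\le\tau_t(f)=\tl\tau_{[s]}(f)$. Applying the same reasoning to $\tl\tau_{[s]}\m=\tl\tau_{[s\m]}$ (which is a join of exactly the same form, by the proof of Lemma~\ref{tilde-tau-pact}) yields the reverse implication, so $e\le f\iff\tl\tau_{[s]}(e)\le\tl\tau_{[s]}(f)$ and $\tl\tau_{[s]}\in\Sg E$. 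I expect this order-isomorphism step to be the only real obstacle: the point is to reduce matters to a single $\tau_t$ by ascending to the larger element $f$ and then invoking ideal-closure for $e$, which sidesteps any analysis of how the compatible bijections $\{\tau_t\}_{t\in[s]}$ overlap.
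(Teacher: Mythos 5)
Your proposal is correct and follows essentially the same route as the paper's proof: invoke Lemma~\ref{tilde-tau-pact} for the premorphism property, observe that $\dom{\tl\tau_{[s]}}$ and $\ran{\tl\tau_{[s]}}$ are ideals as unions of ideals, and prove order-preservation by locating the larger element $f$ in a single $\dom{\tau_t}$, pulling $e$ in via the ideal property, and then handling the inverse through $\tl\tau\m_{[s]}=\tl\tau_{[s]\m}$. The paper's argument is exactly this, so there is nothing to add.
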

\begin{proof}
	We know from Lemma~\ref{tilde-tau-pact} that $\tl\tau$ is a premorphism from $S/\rho$ to $\cI E$. So, we only need to prove that $\tl\tau(S/\rho)\sst\Sg E$. Clearly, $\dom{\tl\tau_{[s]}}$ and $\ran{\tl\tau_{[s]}}$ are ideals of $E$ as unions of ideals $\dom{\tau_t}$ and $\ran{\tau_t}$, respectively, where $t\in[s]$. Furthermore, let $e,f\in\dom{\tl\tau_{[s]}}$ and $e\le f$. There exists $t\in[s]$, such that $f\in\dom{\tau_t}$. Since $\dom{\tau_t}$ is an ideal, it follows that $e\in\dom{\tau_t}$. Then, taking into account the fact that $\tau_t$ preserves $\le$, we have
	\begin{align*}
	\tl\tau_{[s]}(e)=\tau_t(e)\le\tau_t(f)=\tl\tau_{[s]}(f).
	\end{align*}
	Since $\tl\tau\m_{[s]}=\tl\tau_{[s]\m}$, then $\tl\tau\m_{[s]}$ is also order-preserving. Thus, $\tl\tau_{[s]}\in\Sg E$.
\end{proof}

\section{The embedding theorem}

Let $S$ be an inverse semigroup. It is well known (see, for example, \cite[Theorem~5.2.8]{Lawson}) that the map $\dt:S\to\Sg{E(S)}$, $s\mapsto\dt_s$, where
\begin{align}\label{dl_s-defn}
\dt_s:s\m s E(S)\to ss\m E(S)\mbox{ and }\dt_s(e)=ses\m\mbox{ for any }e\in s\m sE(S),
\end{align}
is a global action of $S$ on $E(S)$, called the \emph{Munn representation} of $S$.

\begin{prop}\label{S-embeds-into-E(S)*S}
	Let $S$ be an inverse semigroup and $\rho$ an idempotent pure congruence on $S$. Then $S$ embeds into $E(S)\rt_{\tl\dt}(S/\rho)$.
\end{prop}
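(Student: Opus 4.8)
The plan is to construct an explicit map $\varphi\colon S\to E(S)\rt_{\tl\dt}(S/\rho)$ and show it is an injective homomorphism. The natural candidate, guided by the definition of the Munn representation $\dt$ and the fact that the second coordinate of the semidirect product lives in $S/\rho$, is to send
\begin{align*}
\varphi(s)=(ss\m,[s]).
\end{align*}
The first thing to verify is that this is well defined, i.e. that $(ss\m,[s])\in E(S)\rt_{\tl\dt}(S/\rho)$. By definition~\eqref{E-rt-S-defn} this requires $ss\m\in\ran{\tl\dt_{[s]}}$, and since $\tl\dt_{[s]}=\Jn_{t\in[s]}\dt_t$ with $\ran{\dt_s}=ss\m E(S)\ni ss\m$, the element $ss\m$ indeed lies in $\ran{\dt_s}\subseteq\ran{\tl\dt_{[s]}}$ by~\eqref{exists-Join-f_i-in-I(X)}. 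So $\varphi$ lands in the right set.

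Next I would check that $\varphi$ is a homomorphism. Using the multiplication~\eqref{(e_s)(e_t)-defn} one computes
\begin{align*}
\varphi(s)\varphi(t)=(ss\m,[s])(tt\m,[t])=\bigl(\tl\dt_{[s]}(\tl\dt\m_{[s]}(ss\m)\mt tt\m),[s][t]\bigr),
\end{align*}
and this must be shown equal to $\varphi(st)=(st(st)\m,[st])$. The second coordinates agree since $\rho$ is a congruence. For the first coordinate the key point is that $\tl\dt\m_{[s]}(ss\m)$ should equal $s\m s$: indeed $\dt\m_s=\dt_{s\m}$ sends $ss\m\mapsto s\m(ss\m)s=s\m s$, and I expect that the join over the $\rho$-class does not change this value on the idempotent $ss\m$. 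Granting that, the first coordinate becomes $\tl\dt_{[s]}(s\m s\mt tt\m)$, and evaluating the join at this idempotent using a suitable representative reduces to $\dt_s(s\m s\, tt\m)=s(s\m s\,tt\m)s\m=s tt\m s\m=st(st)\m$, as required.

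The injectivity of $\varphi$ is where the hypothesis that $\rho$ is idempotent pure becomes essential, and this is the step I expect to be the main obstacle. Suppose $\varphi(s)=\varphi(t)$, so that $ss\m=tt\m$ and $[s]=[t]$, i.e. $(s,t)\in\rho$. From~\eqref{rho-sst-sim} and~\eqref{s-sim-t<=>s^(-1)t-and-st^(-1)-in-E(S)} the relation $\rho\sst\sim$ gives $s\m t,\,st\m\in E(S)$. The plan is to combine $st\m\in E(S)$ with $ss\m=tt\m$ to force $s=t$: since $st\m$ is idempotent and $ss\m=tt\m$, one should get $st\m=st\m(st\m)\m$ collapsing to the common idempotent, and a short computation with the compatibility conditions then yields $s=st\m t$ and $t=ts\m s$, which together with $s\m t\in E(S)$ pin down $s=t$. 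The delicate part is marshalling exactly the right idempotent identities; I would work carefully with $s\m s$, $ss\m$ and the products $s\m t$, $st\m$, using that all these are idempotents and that idempotents commute, to conclude equality of $s$ and $t$.
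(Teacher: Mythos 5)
Your proposal is correct and is essentially the paper's own proof: the same map $\varphi(s)=(ss\m,[s])$, the same well-definedness check, and the same homomorphism computation — and the step you only "grant" (that $\tl\dt_{[s]\m}$ agrees with $\dt_{s\m}$ at $ss\m$) is indeed immediate, since by~\eqref{exists-Join-f_i-in-I(X)} the join over a $\rho$-class is the union of compatible partial bijections and hence extends $\dt_{s\m}$. The only divergence is injectivity, where the paper cites the standard fact that $\cR\cap\sim$ is the identity relation (Petrich, Lemma III.2.13) while you verify it by hand; your sketched identities do hold and suffice, since $st\m=ts\m$ and $s\m t=t\m s$ (idempotents are self-inverse) give $st\m t=t(s\m t)=tt\m s=ss\m s=s$ and symmetrically $ts\m s=s(t\m s)=t$, so $s\le t$ and $t\le s$, whence $s=t$ by antisymmetry of the natural partial order.
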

\begin{proof}
	Define $\varphi:S\to E(S)\rt_{\tl\dt}(S/\rho)$ by 
	\begin{align}\label{phi(s)=(ss^(-1)_s)}
	\varphi(s)=(ss\m,[s]).
	\end{align}
	As $ss\m\in ss\m E(S)\subseteq\ran{\tl\dt_{[s]}}$, the map $\varphi$ is well defined. Since by~\eqref{tl-tau-formula} and~\eqref{dl_s-defn}
	\begin{align*}
	(ss\m,[s])(tt\m,[t])&=(\tl\dt_{[s]}(\tl\dt_{[s]\m}(ss\m)tt\m),[st])\\
	&=(\tl\dt_{[s]}(\dt_{s\m}(ss\m)tt\m),[st])\\
	&=(\tl\dt_{[s]}(s\m s\cdot tt\m),[st])\\
	&=(\dt_s(s\m s\cdot tt\m),[st])\\
	&=(stt\m s\m,[st])\\
	&=(st(st)\m,[s][t]),
	\end{align*}
	$\varphi$ is a homomorphism. Suppose that $\varphi(s)=\varphi(t)$. By~\eqref{phi(s)=(ss^(-1)_s)} this means that $(s,t)\in\cR\cap\rho$, where $\cR$ is the Green's relation on $S$, defined by 
	\begin{align*}
	(s,t)\in\cR\iff ss\m=tt\m.
	\end{align*}
	As $\cR\cap\rho\sst\cR\cap\sim$, it follows by~\cite[Lemma III.2.13]{Petrich} that $s=t$,
	and hence $\varphi$ is one-to-one.
\end{proof}

To make the result of Proposition~\ref{S-embeds-into-E(S)*S} more precise, we follow \cite{O'Carroll77}. We say that a partial action $\tau$ of $S$ on $E$ is {\it strict}, if for each $e\in E$ the set $\{f\in E(S)\mid e\in\dom{\tau_f}\}$ has a minimum element, denoted by $\alpha(e)$, and the map $\alpha:E\to E(S)$ is a homomorphism of meet semilattices. Observe using~\eqref{id_ran-tau_s-sst-tau_ss^(-1)} that if $(e,s)\in E\rt_\tau S$, then $\alpha(e)\le ss\m$. The set of $(e,s)\in E\rt_\tau S$ for which 
\begin{align}\label{af(e)=ss^(-1)}
\alpha(e)=ss\m
\end{align}
will be denoted by $E\rt^m_\tau S$.

\begin{lem}\label{L_m(E-tau-S)-inv-sem}
	The subset $E\rt^m_\tau S$ is an inverse subsemigroup of $E\rt_\tau S$.
\end{lem}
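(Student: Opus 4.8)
The plan is to show that $E\rt^m_\tau S$ is closed under the multiplication~\eqref{(e_s)(e_t)-defn} and under the inversion $(e,s)\mapsto(\tau\m_s(e),s\m)$ provided by Lemma~\ref{L(E-tau-S)-inv-sem}; since $E\rt_\tau S$ is already an inverse semigroup, this is enough to conclude that $E\rt^m_\tau S$ is an inverse subsemigroup. Both closure properties will follow from a single \emph{equivariance} identity for the minimum map $\alpha$, which I would isolate and prove first:
\begin{align}\label{plan-alpha-equivar}
\alpha(\tau_s(x))=s\,\alpha(x)\,s\m\quad\text{for all } s\in S \text{ and } x\in\dom{\tau_s}.
\end{align}

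To prove~\eqref{plan-alpha-equivar}, first note that $\alpha(x)$ lies in the set it minimizes, so $x\in\dom{\tau_{\alpha(x)}}$, and since $\tau_{\alpha(x)}\in E(\Sg{E})$ by~\eqref{tau(E(S))-sst-E(T)} it acts as the identity on its domain. For the inequality $\le$, I would use axiom~\ref{tau(s)tau(t)<=tau(st)} to get $\tau_s\tau_{\alpha(x)}\tau_{s\m}\le\tau_{s\alpha(x)s\m}$ and then check that $\tau_s(x)$ lies in the domain of the left-hand side, by tracing that $\tau_{s\m}$ sends $\tau_s(x)$ to $x$, then $\tau_{\alpha(x)}$ fixes $x$, then $\tau_s$ returns $x$ to $\tau_s(x)$; as $\le$ is containment of partial bijections, this gives $\tau_s(x)\in\dom{\tau_{s\alpha(x)s\m}}$ and hence $\alpha(\tau_s(x))\le s\alpha(x)s\m$. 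For the reverse inequality --- the more delicate point --- I would apply the inequality just obtained with $s$ replaced by $s\m$ and $x$ replaced by $\tau_s(x)\in\dom{\tau_{s\m}}$, yielding $\alpha(x)\le s\m\,\alpha(\tau_s(x))\,s$; multiplying on the left by $s$ and on the right by $s\m$ and using that $\alpha(\tau_s(x))\le ss\m$ (the observation preceding the lemma, applied to $(\tau_s(x),s)\in E\rt_\tau S$) collapses $ss\m\alpha(\tau_s(x))ss\m$ to $\alpha(\tau_s(x))$, giving $s\alpha(x)s\m\le\alpha(\tau_s(x))$.

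With~\eqref{plan-alpha-equivar} in hand the two closures become short computations. For inversion, $\alpha(\tau\m_s(e))=s\m\alpha(e)s$ by~\eqref{plan-alpha-equivar} (with $s\m$ in place of $s$), and if $\alpha(e)=ss\m$ this equals $s\m s=(s\m)(s\m)\m$, so $(\tau\m_s(e),s\m)\in E\rt^m_\tau S$. For the product $(e,s)(f,t)=(\tau_s(\tau\m_s(e)\mt f),st)$, the element $\tau\m_s(e)\mt f$ lies in the ideal $\dom{\tau_s}$, so applying~\eqref{plan-alpha-equivar} and the fact that $\alpha$ is a meet-semilattice morphism gives
\begin{align*}
\alpha(\tau_s(\tau\m_s(e)\mt f))=s\,\alpha(\tau\m_s(e))\,\alpha(f)\,s\m.
\end{align*}
Assuming $(e,s),(f,t)\in E\rt^m_\tau S$, one has $\alpha(f)=tt\m$ and $\alpha(\tau\m_s(e))=s\m\alpha(e)s=s\m s$, whence the right-hand side simplifies to $stt\m s\m=st(st)\m$, as required. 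The main obstacle is the equivariance~\eqref{plan-alpha-equivar}, and within it the reverse inequality, which needs the symmetry trick together with the bound $\alpha(\tau_s(x))\le ss\m$; everything afterwards is routine bookkeeping with commuting idempotents.
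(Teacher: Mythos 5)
Your proposal is correct and follows essentially the same route as the paper: both hinge on first establishing the equivariance identity $\alpha(\tau_s(e))=s\alpha(e)s\m$ (the paper's analogue of O'Carroll's Lemma~2), proving the inequality $\le$ by showing $\tau_s(e)\in\dom{\tau_{s\alpha(e)s\m}}$ via the premorphism axioms, and getting the reverse inequality by the same substitution trick ($s\mapsto s\m$, $e\mapsto\tau_s(e)$), after which closure of $E\rt^m_\tau S$ under the product and inversion follows by the identical computations $\alpha(\tau_s(\tau\m_s(e)\mt f))=s\cdot s\m\alpha(e)s\cdot\alpha(f)\cdot s\m=st(st)\m$ and $\alpha(\tau\m_s(e))=s\m\alpha(e)s=s\m s$. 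The only differences are cosmetic: you trace the domain through $\tau_s\tau_{\alpha(e)}\tau_{s\m}$ where the paper uses $\ran{(\tau_s\tau_{\alpha(e)})}$ together with~\eqref{id_ran-tau_s-sst-tau_ss^(-1)}, and you spell out the collapsing step in the reverse inequality that the paper leaves to the reader.
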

\begin{proof}
	We first observe, as in \cite[Lemma 2]{O'Carroll77}, that 
	\begin{align}\label{alpha(tau_s(e))}
	\alpha(\tau_s(e))=s\alpha(e)s\m  
	\end{align}
	for any $e\in\dom{\tau_s}$. Indeed, $e\in\dom{\tau_{\alpha(e)}}$, therefore using~\ref{tau(s)tau(t)<=tau(st)} and~\eqref{id_ran-tau_s-sst-tau_ss^(-1)} we have
	\begin{align*}
	\tau_s(e)\in\ran{(\tau_s\tau_{\alpha(e)})}\subseteq\ran{\tau_{s\alpha(e)}}\subseteq\dom{\tau_{s\alpha(e)(s\alpha(e))\m}}=\dom{\tau_{s\alpha(e)s\m}},
	\end{align*}
	whence
	\begin{align}\label{af(tau_s(e))<=s-af(e)s^(-1)}
	\alpha(\tau_s(e))\le s\alpha(e)s\m.
	\end{align}
	For the converse inequality replace $e$ by $\tau_s(e)$ and $s$ by $s\m$ in~\eqref{af(tau_s(e))<=s-af(e)s^(-1)} and then use \ref{tau(s-inv)=tau(s)-inv}.
	
	Let $(e,s),(f,t)\in E\rt^m_\tau S$. Then for their product $(\tau_s(\tau\m_s(e)\mt f),st)$ we have by \ref{tau(s-inv)=tau(s)-inv},~\eqref{af(e)=ss^(-1)} and~\eqref{alpha(tau_s(e))} that
	$$
	\alpha(\tau_s(\tau\m_s(e)\mt f))=s\alpha(\tau\m_s(e)\mt f)s\m=s\cdot s\m\alpha(e)s\cdot\alpha(f)s\m=stt\m s\m.
	$$
	Hence, $(e,s)(f,t)\in E\rt^m_\tau S$. Moreover, for each $(e,s)\in E\rt^m_\tau S$ the inverse $(e,s)\m=(\tau\m_s(e),s\m)$ belongs to $E\rt^m_\tau S$, as $\alpha(\tau\m_s(e))=s\m\alpha(e)s=s\m s$.
\end{proof}

A strict partial action $\tau$ of $S$ on $E$ is called {\it fully strict}, if the homomorphism 
\begin{align}\label{pi(e_s)-mapsto-s}
\pi:E\rt^m_\tau S\to S,\ (e,s)\mapsto s,
\end{align}
is surjective.

\begin{rem}\label{L_m=L-iff-S-group}
	Let $\tau$ be a fully strict partial action of $S$ on $E$. Then $E\rt^m_\tau S=E\rt_\tau S$ if and only if $S$ is a group.
\end{rem}
\noindent Indeed, the ``if'' part trivially holds for any strict $\tau$. For the ``only if'' part take $s\in S$ and suppose that $s\le t$ for some $t\in S$. Since $\tau$ is fully strict, there are $e,f\in E$, such that $(e,s),(f,t)\in E\rt^m_\tau S$. In particular, $e\in\ran{\tau_s}$ and $f\in\ran{\tau_t}$. Since $\ran{\tau_s}$ and $\ran{\tau_t}$ are ideals of $E$, we have $ef\in\ran{\tau_s}\cap\ran{\tau_t}$, whence $(ef,s),(ef,t)\in E\rt_\tau S$. But $E\rt_\tau S=E\rt^m_\tau S$, so $ss\m=\alpha(ef)=tt\m$ by~\eqref{af(e)=ss^(-1)}. Consequently, $(s,t)\in\cR$. Then $s=t$ thanks to~\cite[Proposition~3.2.3 (2)]{Lawson}, and thus the natural partial order on $S$ is trivial. It remains to apply~\cite[Proposition~1.4.10]{Lawson}.

\begin{thrm}\label{S-embeds-into-L-precise}
	Given an inverse semigroup $S$ and an idempotent pure congruence $\rho$ on $S$, there exists a fully strict partial action $\tau$ of $S/\rho$ on $E(S)$, such that $S$ embeds into $E(S)\rt_\tau(S/\rho)$. Moreover, the image of $S$ in $E(S)\rt_\tau(S/\rho)$ coincides with $E(S)\rt^m_\tau(S/\rho)$, and $\rho$ is induced by the epimorphism $\pi:E(S)\rt^m_\tau(S/\rho)\to S/\rho$. In particular, the embedding is surjective if and only if $\rho$ is a group congruence, i.e. $S$ is $E$-unitary and $\rho=\sigma$.
\end{thrm}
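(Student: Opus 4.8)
The partial action to use is $\tau=\tl\dt$, the lift to $S/\rho$ of the Munn representation $\dt$ furnished by Lemma~\ref{L(E-tau-S)-mod-tilde-rho-isom}, and the embedding is the homomorphism $\varphi$ of Proposition~\ref{S-embeds-into-E(S)*S}. With these choices the theorem splits into four tasks: (a) $\tl\dt$ is strict; (b) $\tl\dt$ is fully strict; (c) $\varphi(S)=E(S)\rt^m_{\tl\dt}(S/\rho)$; (d) $\rho$ is induced by $\pi$, together with the surjectivity criterion. My plan is to treat these in turn, the real work being in (a) and (c).

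For (a) I would first describe the domains of $\tl\dt$ on idempotent classes. Since $\rho$ is idempotent pure, $f\in E(S)$ forces $[f]\sst E(S)$, and for an idempotent $t$ the Munn chart $\dt_t$ is simply $\id_{tE(S)}$; hence by~\eqref{tl-tau-formula} $\dom{\tl\dt_{[f]}}=\{e\in E(S)\mid e\le t\text{ for some }t\in[f]\}$. Fixing $e\in E(S)$, I want the minimum of $A=\{[f]\in E(S/\rho)\mid e\in\dom{\tl\dt_{[f]}}\}$. Clearly $[e]\in A$ (take $t=e$). Conversely, if $e\in\dom{\tl\dt_{[f]}}$, pick $t\in[f]$ with $e\le t$; then $(t,f)\in\rho$ gives $(e,ef)=(et,ef)\in\rho$, so $[e]=[ef]=[e]\mt[f]$, i.e. $[e]\le[f]$. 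Thus $[e]=\min A$, so $\af$ exists and equals the restriction to $E(S)$ of the quotient map $S\to S/\rho$, which is evidently a semilattice morphism. This settles strictness.

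Statements (b) and (c) I would prove together. For any $s\in S$ one has $ss\m\in\ran{\tl\dt_{[s]}}$ (as noted in Proposition~\ref{S-embeds-into-E(S)*S}) and $\af(ss\m)=[ss\m]=[s][s]\m$, so $\varphi(s)=(ss\m,[s])\in E(S)\rt^m_{\tl\dt}(S/\rho)$; since $\pi(\varphi(s))=[s]$ and every class is of this form, $\pi$ is onto, giving (b) and the inclusion $\varphi(S)\sst E(S)\rt^m_{\tl\dt}(S/\rho)$. For the reverse inclusion take $(e,[s])\in E(S)\rt^m_{\tl\dt}(S/\rho)$, so $e\in\ran{\tl\dt_{[s]}}$ and $[e]=\af(e)=[s][s]\m$. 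From $e\in\ran{\tl\dt_{[s]}}=\bigcup_{t\in[s]}tt\m E(S)$ choose $t\in[s]$ with $e\le tt\m$ and set $u=et$. A short computation then gives $uu\m=e$ (using $e\le tt\m$) and $[u]=[e][s]=[s][s]\m[s]=[s]$ (using $[e]=[s][s]\m$), whence $\varphi(u)=(e,[s])$. This yields (c).

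For (d), since $\pi\circ\varphi$ is exactly the quotient map $s\mapsto[s]$, the kernel of $\pi$ corresponds under the isomorphism $\varphi$ to $\rho$, so $\rho$ is induced by $\pi$. Finally, $\varphi$ is surjective iff $E(S)\rt^m_{\tl\dt}(S/\rho)=E(S)\rt_{\tl\dt}(S/\rho)$, which by Remark~\ref{L_m=L-iff-S-group} holds iff $S/\rho$ is a group, i.e. iff $\rho$ is a group congruence; and since $\rho\sst\sigma$ by~\eqref{rho-sst-sigma} while $\sigma$ is the least group congruence, this forces $\rho=\sigma$ with $\sigma$ idempotent pure, i.e. $S$ is $E$-unitary and $\rho=\sigma$. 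I expect the main obstacle to be the strictness computation in (a) and the preimage construction $u=et$ in (c), both of which rely essentially on idempotent purity; the remaining verifications are formal.
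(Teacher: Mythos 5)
Your proposal is correct and follows essentially the same route as the paper's own proof: the same embedding $\varphi(s)=(ss\m,[s])$, the same identification $\af(e)=[e]$ for strictness, the same preimage construction $et$ for surjectivity onto $E(S)\rt^m_{\tl\dt}(S/\rho)$, and the same appeal to Remark~\ref{L_m=L-iff-S-group} together with $\rho\sst\sigma$ for the final equivalence. The only cosmetic difference is that you verify $[e]\le[f]$ via the congruence computation $(e,ef)=(et,ef)\in\rho$, where the paper deduces it directly from $e\le g$ with $g\in[f]$ idempotent.
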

\begin{proof}
	As we know from Proposition~\ref{S-embeds-into-E(S)*S}, there is an embedding $\varphi:S\to E(S)\rt_{\tl\dt}(S/\rho)$ given by~\eqref{phi(s)=(ss^(-1)_s)}. For each $e\in E(S)$ consider the set 
	\begin{align*}
	A_e=\left\{[f]\in E(S/\rho)\mid e\in\dom{\tau_{[f]}}\right\}.
	\end{align*}
	Clearly, $[e]\in A_e$, as $e\in eE(S)=\dom{\dt_e}\sst\dom{\tl\dt_{[e]}}$. Moreover, if $[f]\in A_e$, then $e\in gE(S)$ for some idempotent $g\in[f]$. So, $e\le g$ and hence $[e]\le[g]=[f]$. This shows that $[e]$ is the minimum element of $A_e$, and since $\alpha(e)=[e]$ is a homomorphism $E(S)\to E(S/\rho)$, the partial action $\tl\dt$ is strict. Now, for each $s\in S$ consider the pair $\varphi(s)=(ss\m,[s])\in E(S)\rt_{\tl\dt}(S/\rho)$. We have $\alpha(ss\m)=[ss\m]=[s][s]\m$, so in fact $(ss\m,[s])\in E(S)\rt^m_{\tl\dt}(S/\rho)$, proving that $\tl\dt$ is fully strict.
	
	The range of the embedding $\varphi$ is contained in $E(S)\rt^m_{\tl\dt}(S/\rho)$ as shown above. Now if $(e,[s])\in E(S)\rt^m_{\tl\dt}(S/\rho)$, then $e\in tt\m E(S)$ for some $t\in[s]$ and $[e]=\alpha(e)=[s][s]\m$. It follows that $e=ett\m=et(et)\m$ and $[et]=[s][s]\m[t]=[s][s]\m[s]=[s]$. Therefore, $(e,[s])=\varphi(et)$, whence $\varphi(S)=E(S)\rt_{\tl\dt}^m(S/\rho)$. Moreover, $\pi(\varphi(s))=\pi(\varphi(t))\iff[s]=[t]\iff (s,t)\in\rho$ in view of~\eqref{phi(s)=(ss^(-1)_s)} and~\eqref{pi(e_s)-mapsto-s}.
	
	Finally, if $\varphi$ is surjective, then $E(S)\rt^m_{\tl\dt}(S/\rho)=E(S)\rt_{\tl\dt}(S/\rho)$, so $S/\rho$ is a group by Remark~\ref{L_m=L-iff-S-group}. Therefore, $\rho\supseteq\sigma$ by \cite[Theorem 2.4.1 (3)]{Lawson}, which yields $\rho=\sigma$ in view of \ref{rho-sst-sigma}, and thus $S$ is $E$-unitary. 
\end{proof}

\section{Globalizable partial actions and O'Carroll $L$-triples}\label{glob-sec}
Let $S$ be an inverse semigroup, $\phi:S\to\cI X$, $s\mapsto\phi_s$, a partial action of $S$ on a set $X$ and $Y\sst X$. Then for any $s\in S$ the partial bijection
\begin{align}\label{tau_s=id_Y-phi_s-id_Y}
\tau_s=\id_Y\phi_s\id_Y
\end{align}
is the restriction of $\phi_s$ to the subset 
\begin{align}\label{dom-restr-phi_s}
\dom{\tau_s}=\{y\in \dom{\phi_s}\cap Y\mid \phi_s(y)\in Y\}=\phi\m_s(\ran{\phi_s}\cap Y)\cap Y\sst\dom{\phi_s}.
\end{align}
Thus, $\tau_s$ is a bijection 
\begin{align*}%\label{dom-ran-of-restr}
\phi\m_s(\ran{\phi_s}\cap Y)\cap Y\to \phi_s(\dom{\phi_s}\cap Y)\cap Y.
\end{align*}
Moreover, 
\begin{align*}
\tau_s\tau_t=\id_Y\phi_s\id_Y\phi_t\id_Y\le\id_Y\phi_s\phi_t\id_Y\le\id_Y\phi_{st}\id_Y=\tau_{st},
\end{align*}
so that $\tau:S\to\cI Y$, $s\mapsto\tau_s$, is a partial action of $S$ on $Y$ called the \emph{restriction} of $\phi$ to $Y$.

A partial action $\tau$ of an inverse semigroup $S$ on a set $Y$ is said to be \emph{globalizable} if it admits a \emph{globalization}\footnote{In a more general context this was called an \emph{augmented action} in \cite{Gould-Hollings09}}, i.e. a global action $\phi$ of $S$ on a set $X$ together with an injective map $\io:Y\to X$, such that
\begin{align}\label{tau_s=i^(-1)-phi_s-i}
\tau_s=\bar\io\phi_s\bar{\io}\m
\end{align}
holds for all $s\in S$. Here $\bar{\io}$ is the bijection $Y\to\io(Y)$, $y\mapsto\io(y)$. It was proved in \cite[Theorem 6.10]{Gould-Hollings09} that $\tau$ is globalizable provided that it is order-preserving, i.e. $s\le t\impl\tau_s\le\tau_t$ for all $s,t\in S$. Observe that the converse of this fact is also true. For, a global action $\phi$ of $S$ on $X$, being a homomorphism $S\to\cI X$, is order-preserving, so that its restriction $\tau$ defined by \eqref{tau_s=id_Y-phi_s-id_Y} is also order-preserving.

Globalizable partial actions of inverse semigroups on semilattices turn out to be closely related to O'Carroll $L$-triples. Recall from~\cite{O'Carroll77} that an \emph{$L$-triple} is $(T,X,Y)$, where 
\begin{enumerate}
	\item $T$ is an inverse semigroup, $X$ is a down-directed poset, $Y$ is a meet subsemilattice and order ideal of $X$;
	\item $T$ acts (globally) on $X$ via a homomorphism $\phi:T\to\cI X$, $t\mapsto\phi_t$, such that $\phi_t$ is an order isomorphism between	non-empty order ideals of $X$;
	\item $X=TY$, where $TY=\bigcup_{t\in T}\phi_t(\dom{\phi_t}\cap Y)$.
\end{enumerate}
With any $L$-triple $(T,X,Y)$ O'Carroll associates in~\cite{O'Carroll77} the inverse semigroup
\begin{align}\label{L(T_X_Y)-defn}
L(T,X,Y)=\{(a,t)\in X\times T\mid a\in Y\cap\ran{\phi_t},\ \phi\m_t(a)\in Y\},
\end{align}
where the product of any two pairs from $L(T,X,Y)$ is given by the same formula~\eqref{(e_s)(e_t)-defn} as we used in the case of a partial action. An $L$-triple $(T,X,Y)$ was called \emph{strict} in~\cite{O'Carroll77} if for every $y\in Y$ the set $\{f\in E(T)\mid y\in\dom{\phi_f}\}$ has a minimum element $e(y)$, and the map $e:Y\to E(T)$, $y\mapsto e(y)$, is a homomorphism of meet semilattices. In this case 
\begin{align*}
L_m(T,X,Y)=\{(a,t)\in L(T,X,Y)\mid e(a)=tt\m\}
\end{align*}
is an inverse subsemigroup of $L(T,X,Y)$ by~\cite[Theorem~2]{O'Carroll77}. Moreover, if the homomorphism $L_m(T,X,Y)\to T$, $(a,t)\mapsto t$, is surjective, then the strict $L$-triple $(T,X,Y)$ was said to be \emph{fully strict}~\cite{O'Carroll77}.

\begin{prop}\label{from-L-triple-to-part-act}
	Let $(T,X,Y)$ be an $L$-triple and $\phi:T\to\cI X$ the corresponding action of $T$ on $X$. Denote by $\tau$ the restriction of $\phi$ to $Y$. Then $\tau$ is a partial action of $T$ on the meet semilattice $Y$ such that $\dom{\tau_t}\ne\emptyset$ for all $t\in T$, and $L(T,X,Y)=Y\rt_\tau T$. Moreover, $\tau$ is strict (resp. fully strict) if and only if $(T,X,Y)$ is strict (resp. fully strict), in which case $L_m(T,X,Y)=Y\rt_\tau^m T$.
\end{prop}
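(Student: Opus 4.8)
The plan is to establish the three assertions in order, using throughout that, by~\eqref{tau_s=i^(-1)-phi_s-i}--\eqref{dom-restr-phi_s}, each $\tau_t$ is the restriction of the order isomorphism $\phi_t$ to $\dom{\tau_t}=\{y\in\dom{\phi_t}\cap Y\mid\phi_t(y)\in Y\}$. Since the restriction of $\phi$ to $Y$ is already known (Section~\ref{glob-sec}) to be a partial action of $T$ on the \emph{set} $Y$, the only new content in the first assertion is that $\tau(T)\sst\Sg Y$. First I would check that $\dom{\tau_t}$ and $\ran{\tau_t}$ are order ideals of $Y$: if $y\in\dom{\tau_t}$ and $z\le y$ with $z\in Y$, then $z\in\dom{\phi_t}$ because $\dom{\phi_t}$ is an ideal of $X$, and $\phi_t(z)\le\phi_t(y)\in Y$ forces $\phi_t(z)\in Y$ because $Y$ is an ideal, so $z\in\dom{\tau_t}$; the argument for $\ran{\tau_t}$ is symmetric. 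As $\tau_t$ is a restriction of the order isomorphism $\phi_t$ to these ideals, it is itself an order isomorphism between them, whence $\tau_t\in\Sg Y$.

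The one genuinely non-formal point in the first assertion is that $\dom{\tau_t}\ne\emptyset$ for every $t\in T$, and this is exactly where the hypotheses that $X$ is down-directed and $X=TY$ enter (the latter only to guarantee $Y\ne\emptyset$, since $X\ne\emptyset$). I would argue in two steps. First, choosing $a_0\in\dom{\phi_t}$ and $y_0\in Y$ and a common lower bound $w\le a_0,y_0$, the ideal properties of $\dom{\phi_t}$ and of $Y$ give $w\in Y\cap\dom{\phi_t}$; set $a=w$. Second, taking a common lower bound $v\le\phi_t(a),a$, the same properties give $v\in Y$ and $v\in\ran{\phi_t}$, so $u=\phi\m_t(v)\in\dom{\phi_t}$ satisfies $u\le a$ (as $\phi\m_t$ preserves order and $v\le\phi_t(a)$), hence $u\in Y$, while $\phi_t(u)=v\in Y$; thus $u\in\dom{\tau_t}$. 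I expect this short step to be the main obstacle, since it is the only place where down-directedness is essential; everything else is bookkeeping.

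For the equality $L(T,X,Y)=Y\rt_\tau T$ I would first record that $\ran{\tau_t}=\{a\in\ran{\phi_t}\mid a\in Y,\ \phi\m_t(a)\in Y\}$, read off directly from the description of $\dom{\tau_t}$. The membership condition $a\in\ran{\tau_t}$ defining $Y\rt_\tau T$ then unwinds to $a\in Y\cap\ran{\phi_t}$ together with $\phi\m_t(a)\in Y$, which is precisely the defining condition~\eqref{L(T_X_Y)-defn} of $L(T,X,Y)$; hence the two underlying sets coincide. The multiplications agree because both are given by the same formula~\eqref{(e_s)(e_t)-defn}: for $(a,s),(b,t)$ in the common set one has $\tau\m_s(a)=\phi\m_s(a)\in Y$ and $b\in Y$, so the meet $\phi\m_s(a)\mt b$ is the meet of two elements of the subsemilattice $Y$ and lies in the ideal $\dom{\tau_s}$, on which $\tau_s$ and $\phi_s$ coincide; thus $\tau_s(\tau\m_s(a)\mt b)=\phi_s(\phi\m_s(a)\mt b)$.

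Finally, for strictness I would use that for an idempotent $f\in E(T)$ the partial bijection $\phi_f$ is an idempotent of $\cI X$, hence the partial identity on $\dom{\phi_f}$. Consequently $\dom{\tau_f}=\{y\in\dom{\phi_f}\cap Y\mid\phi_f(y)\in Y\}=\dom{\phi_f}\cap Y$, so for $y\in Y$ one has $y\in\dom{\tau_f}\iff y\in\dom{\phi_f}$. Therefore the sets $\{f\in E(T)\mid y\in\dom{\tau_f}\}$ and $\{f\in E(T)\mid y\in\dom{\phi_f}\}$ are equal, so one has a minimum element if and only if the other does, with the same value: $\tau$ is strict if and only if $(T,X,Y)$ is strict, and then $\alpha=e$. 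Given this, the conditions $\alpha(a)=tt\m$ and $e(a)=tt\m$ cutting out $Y\rt^m_\tau T$ and $L_m(T,X,Y)$ from the common set are identical, so $Y\rt^m_\tau T=L_m(T,X,Y)$; since the two projections onto $T$, that of~\eqref{pi(e_s)-mapsto-s} and $(a,t)\mapsto t$, are then literally the same map, $\tau$ is fully strict if and only if $(T,X,Y)$ is fully strict.
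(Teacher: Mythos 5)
Your proposal is correct, and its overall architecture (ideal properties of $\dom{\tau_t}$ and $\ran{\tau_t}$, identification of the underlying sets via \eqref{dom-restr-phi_s} and \eqref{L(T_X_Y)-defn}, and the strictness comparison via the fact that $\phi_f$ is a partial identity for $f\in E(T)$) matches the paper's proof step by step. The one genuine divergence is exactly where you predicted the main obstacle would lie: the non-emptiness of $\dom{\tau_t}$. The paper disposes of this by citing \cite[Lemma~3]{O'Carroll76}, i.e.\ the equivalence \eqref{X-down-dir<=>dom-tau_t-non-empty}, whereas you prove the needed implication directly: first producing $a\in Y\cap\dom{\phi_t}$ from a common lower bound of some $a_0\in\dom{\phi_t}$ and $y_0\in Y$, then producing $u\in\dom{\tau_t}$ from a common lower bound of $\phi_t(a)$ and $a$, using only down-directedness, the ideal properties, and order-preservation of $\phi_t^{-1}$. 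Your two-step argument is sound (note that $\dom{\phi_t}\ne\emptyset$ is built into condition (ii) of the $L$-triple definition, and $Y\ne\emptyset$ follows from $X=TY$ as you say), and it buys self-containedness: the reader need not consult O'Carroll's earlier paper. What the citation buys the paper is the full equivalence \eqref{X-down-dir<=>dom-tau_t-non-empty}, including the converse direction, which is not needed for this proposition but records when the construction can be reversed. A minor further difference: you explicitly verify that the multiplications on $L(T,X,Y)$ and $Y\rt_\tau T$ agree (using that the meet $\phi_s^{-1}(a)\mt b$ lies in the ideal $\dom{\tau_s}$, on which $\tau_s$ and $\phi_s$ coincide), a point the paper treats as immediate; making it explicit is harmless and arguably clearer.
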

\begin{proof}
	It was shown in~\cite[Lemma~3]{O'Carroll76} that, given an action $\phi$ of $T$ on $(X,\le)$, such that $\phi_t$ is an order isomorphism between order ideals of $X$ for all $t\in T$, and an order ideal $Y$ of $X$ satisfying $TY=X$, one has that
	\begin{align}\label{X-down-dir<=>dom-tau_t-non-empty}
	X\mbox{ is down-directed }\&\ \forall t\in T:\dom\phi_t\ne\emptyset\iff\forall t\in T:\phi_t(\dom\phi_t\cap Y)\cap Y\ne\emptyset.
	\end{align}
	By~\eqref{dom-restr-phi_s} the latter is the same as $\forall t\in T:\ran{\tau_t}\ne\emptyset$ or, equivalently, $\forall t\in T:\dom{\tau_t}\ne\emptyset$. Let $y\in\dom{\tau_t}$, i.e. $y\in\dom{\phi_t}\cap Y$ and $\phi_t(y)\in Y$. For any $z\le y$ one has that $z\in\dom{\phi_t}\cap Y$, as $Y$ and $\dom{\phi_t}$ are ideals of $X$. Moreover, $\phi_t(z)\le \phi_t(y)\in Y$  since $\phi_t$ is order-preserving, so $\phi_t(z)\in Y$. Thus, $z\in\dom{\tau_t}$, and consequently $\tau_t\in\Sg{Y}$ for all $t\in T$.
	
	Notice using~\eqref{dom-restr-phi_s} and~\eqref{L(T_X_Y)-defn} that $(a,t)\in L(T,X,Y)\iff a\in \ran{\tau_t}$. Thus, $L(T,X,Y)$ is exactly $Y\rt_\tau T$ defined by~\eqref{E-rt-S-defn}. Now, for any $y\in Y$ and $f\in E(T)$
	\begin{align*}
	y\in\dom{\phi_f}\iff y\in\dom{\phi_f}\cap Y=\phi_f(\dom{\phi_f}\cap Y)\cap Y=\dom{\tau_f}.	
	\end{align*}
	Hence, $(T,X,Y)$ is strict if and only if $\tau$ is strict. The remaining assertions of the proposition are immediate. 
\end{proof}

\begin{lem}\label{glob-with-T-iota(Y)=X}
	Let $\tau$ be a strict partial action of an inverse semigroup $T$ on a semilattice $(Y,\le)$. If $\tau$ is globalizable, then $\tau$ admits a globalization $\phi:T\to \cI X$ such that $T\io(Y)=X$, where $\io:Y\to X$ is the corresponding injective map.
\end{lem}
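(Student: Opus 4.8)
The plan is to start from an arbitrary globalization, which exists by hypothesis, and to cut its carrier set down to the sub-orbit generated by $\io(Y)$. Concretely, since $\tau$ is globalizable, fix a global action $\phi:T\to\cI X$ together with an injective map $\io:Y\to X$ satisfying $\tau_s=\io\m\circ\phi_s\circ\io$ for all $s\in T$. I would then set
\[
X'=T\io(Y)=\bigcup_{t\in T}\phi_t(\dom{\phi_t}\cap\io(Y))\sst X,
\]
let $\phi'_t$ denote the restriction of $\phi_t$ to $X'$, and claim that $\phi'$, together with $\io$ now viewed as a map $Y\to X'$, is the desired globalization. Everything reduces to two verifications: that $X'$ is invariant, and that $\io(Y)\sst X'$.

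First I would check invariance, i.e. $\phi_u(\dom{\phi_u}\cap X')\sst X'$ for every $u\in T$; this is where the homomorphism property of $\phi$ enters. If $x=\phi_t(y')$ with $y'\in\dom{\phi_t}\cap\io(Y)$ and $x\in\dom{\phi_u}$, then $y'\in\dom{(\phi_u\circ\phi_t)}=\dom{\phi_{ut}}$ and $\phi_u(x)=\phi_{ut}(y')\in\phi_{ut}(\dom{\phi_{ut}}\cap\io(Y))\sst X'$. Since $T$ is closed under inversion and $\phi_u\m=\phi_{u\m}$, the set $X'$ is invariant under every $\phi_u$ and its inverse, so each $\phi'_t$ is a well-defined element of $\cI{X'}$ with $\dom{\phi'_t}=\dom{\phi_t}\cap X'$, and $\phi':T\to\cI{X'}$ inherits the homomorphism property by a routine comparison of domains on $X'$.

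The second, more delicate, point is the inclusion $\io(Y)\sst X'$, which is what makes $\io$ a legitimate map into $X'$ and is where strictness is used. Given $y\in Y$, strictness of $\tau$ supplies an idempotent $f=\alpha(y)\in E(T)$ with $y\in\dom{\tau_f}$; unwinding $\tau_f=\io\m\circ\phi_f\circ\io$ this gives $\io(y)\in\dom{\phi_f}$. But $f$ is idempotent, so $\phi_f$ is a partial identity, whence $\io(y)=\phi_f(\io(y))\in\phi_f(\dom{\phi_f}\cap\io(Y))\sst X'$. Once $\io(Y)\sst X'$ is secured, the globalization identity transfers verbatim: for $s\in T$ one has $\io\m\circ\phi'_s\circ\io=\io\m\circ\phi_s\circ\io=\tau_s$, because $\dom{\phi'_s}\cap\io(Y)=\dom{\phi_s}\cap\io(Y)$ and $\phi'_s$ agrees with $\phi_s$ there. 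Finally $T\io(Y)=X'$ holds by construction, since the same equality $\dom{\phi'_t}\cap\io(Y)=\dom{\phi_t}\cap\io(Y)$ reduces the union defining $T\io(Y)$ relative to $\phi'$ to the one defining $X'$.

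The step I expect to be the crux is the inclusion $\io(Y)\sst X'$: everything else is a mechanical restriction of a homomorphism to an invariant subset. The role of strictness is precisely to guarantee, for each $y\in Y$, an idempotent whose $\phi$-image acts as the identity near $\io(y)$, which forces $\io(y)$ to lie in the orbit $T\io(Y)$ itself rather than merely having its $\phi$-translates land there.
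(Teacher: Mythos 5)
Your proposal is correct and follows essentially the same route as the paper: restrict an arbitrary globalization to the orbit $T\io(Y)$, use the homomorphism property of $\phi$ to get invariance of that orbit (hence that the restricted action is still global), and use strictness --- via the partial identity $\phi_{\alpha(y)}$ --- to get $\io(Y)\sst T\io(Y)$. The paper merely packages the same steps slightly differently (identifying $Y$ with $\io(Y)$ and writing out the domain comparison for $\phi_{tu}=\phi_t\phi_u$ explicitly), so no substantive difference remains.
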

\begin{proof}
	Let $\phi':T\to\cI {X'}$, $\io:Y\to X'$ be a globalization of $\tau$. Without loss of generality we may identify $Y$ with $\io(Y)$, so that $Y\sst X'$, $\io:Y\to X'$ is the inclusion map and $\tau$ is the restriction of $\phi'$ to $Y$. Consider 
	\begin{align}\label{X-is-TY}
	X=TY\sst X'\mbox{ and }\phi\mbox{ being the restriction of }\phi'\mbox{ to }X.
	\end{align}
	Since $\tau$ is strict, each $y\in Y$ belongs to 
	\begin{align*}
	\dom{\tau_{\af(y)}}=\ran{\tau_{\af(y)}}=\phi'_{\af(y)}(\dom{\phi'_{\af(y)}}\cap Y)\cap Y\sst X,
	\end{align*}
	so $Y\sst X$. Moreover, notice by~\eqref{X-is-TY} that
	\begin{align}\label{phi_t(dom-phi'_t-cap-TY)-sst-TY}
	x\in\dom{\phi'_t}\cap X\impl \phi'_t(x)\in X.
	\end{align}
	Indeed, if $x\in\dom{\phi'_t}$ and $x=\phi'_u(y)$ for some $u\in T$ and $y\in\dom{\phi'_u}\cap Y$, then $\phi'_t(x)=\phi'_{tu}(y)$. This implies that
	\begin{align}\label{dom-phi_t=dom-phi'_t-cap-X}
	\dom{\phi_t}=\dom{\phi'_t}\cap X.
	\end{align}
	It follows that~\eqref{tau_s=i^(-1)-phi_s-i} holds, and we only need to show that the partial action $\phi$ is global, i.e. $\phi_{tu}=\phi_t\phi_u$ for all $t,u\in T$. Using the fact that $\phi'$ is global,~\eqref{phi_t(dom-phi'_t-cap-TY)-sst-TY} and~\eqref{dom-phi_t=dom-phi'_t-cap-X}, we have
	\begin{align*}
	y&\in\dom{\phi_{tu}}=\dom{\phi'_{tu}}\cap X=\dom{\phi'_t\phi'_u}\cap X\\
	&\iff y\in\dom{\phi'_u}\cap X\ \&\ \phi'_u(y)\in\dom{\phi'_t}\\
	&\iff y\in\dom{\phi'_u}\cap X\ \&\ \phi'_u(y)\in\dom{\phi'_t}\cap X\\
	&\iff y\in\dom{\phi_u}\ \&\ \phi_u(y)\in\dom{\phi_t}\\
	&\iff y\in\dom{\phi_t\phi_u},
	\end{align*}
	in which case $\phi_{tu}(y)=\phi'_{tu}(y)=\phi'_t(\phi'_u(y))=\phi_t(\phi_u(y))$.
\end{proof}

\begin{lem}\label{part-order-on-TY}
	Let $\tau$ be a strict partial action of an inverse semigroup $T$ on a semilattice $(Y,\le)$ and $\phi:T\to \cI X$ a globalization of $\tau$ such that $T\io(Y)=X$, where $\io:Y\to X$ is the corresponding injective map. Then there exists a partial order $\le'$ on $X$, such that $\phi_t:\dom{\phi_t}\to\ran{\phi_t}$ is an order isomorphism between order ideals of $X$. Moreover, $(\io(Y),\le')$ is an order ideal and meet subsemilattice of $(X,\le')$ isomorphic to $(Y,\le)$.
\end{lem}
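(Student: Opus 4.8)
The plan is to transport the order of $Y$ to all of $X$ through the charts $\phi_t$, using only two structural facts: that $\phi$ is global, so $\phi_{tu}=\phi_t\phi_u$ and $\phi_{t\m}=\phi_t\m$, and that each restriction $\tau_t\in\Sg Y$, i.e. is an order isomorphism between order ideals of $(Y,\le)$. Identifying $Y$ with $\io(Y)\sst X$, I define, for $a,b\in X$,
\begin{align*}
a\le' b\iff \exists\, t\in T\ \exists\, y,z\in Y:\ y\le z,\ a=\phi_t(y)\ \&\ b=\phi_t(z).
\end{align*}
Since $X=T\io(Y)=TY$, every $a\in X$ has the form $\phi_t(y)$ with $y\in Y$, so $\le'$ is reflexive; and if $y,z\in Y$ satisfy $y\le' z$, then applying a witnessing chart $\phi_t$ together with the fact that $\tau_t$ is order-preserving gives $y\le z$, so $\le'$ restricts to $\le$ on $Y$.

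The technical core, which I expect to be \emph{the main obstacle}, is the auxiliary claim that $\dom{\phi_v}\cap Y$ is an order ideal of $(Y,\le)$ for every $v\in T$; this is the single point where the hypothesis $X=TY$ is used in an essential way. To prove it I take $q\in\dom{\phi_v}\cap Y$ and $p\le q$ in $Y$, write $\phi_v(q)=\phi_w(r)$ with $r\in Y$ (possible since $X=TY$), and deduce $q=\phi_{v\m w}(r)$, so that $q\in\ran{\tau_{v\m w}}$. As $\ran{\tau_{v\m w}}$ is an order ideal of $Y$ and $p\le q$, we get $p\in\ran{\tau_{v\m w}}$, say $p=\phi_{v\m w}(r')$; unwinding with the global identities then gives $\phi_v(p)=\phi_w(r')$, so $p\in\dom{\phi_v}$.

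Antisymmetry and transitivity of $\le'$ are handled by reducing two witnessing charts to a common one. For transitivity, if $a=\phi_t(y)\le'\phi_t(y')=b$ with $y\le y'$ and $b=\phi_u(w)\le'\phi_u(w')=c$ with $w\le w'$, then $\phi_{u\m t}(y')=w$, so $y'\in\dom{\tau_{u\m t}}$; since this domain is an order ideal and $y\le y'$, also $y\in\dom{\tau_{u\m t}}$ and $\tau_{u\m t}(y)\le w\le w'$, which exhibits $a\le' c$ through the chart $\phi_u$. Antisymmetry is similar: comparing the two witnessing charts through $\tau_{u\m t}\in\Sg Y$ and invoking antisymmetry of $\le$ in $Y$ forces the two points to coincide. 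Thus $\le'$ is a partial order extending $\le$ on $Y$.

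It remains to verify the structural assertions. Each $\phi_t$ is order-preserving for $\le'$: if $a=\phi_u(p)\le'\phi_u(q)=b$ with $p\le q$ and $a,b\in\dom{\phi_t}$, then the chart $\phi_{tu}$ satisfies $\phi_{tu}\m(\phi_t(a))=p$ and $\phi_{tu}\m(\phi_t(b))=q$, so $\phi_t(a)\le'\phi_t(b)$; applying this also to $\phi_{t\m}=\phi_t\m$ shows $\phi_t$ is an order isomorphism onto its range. That $\dom{\phi_t}$, and hence $\ran{\phi_t}=\dom{\phi_{t\m}}$, is an order ideal of $(X,\le')$ follows from the auxiliary claim: writing $a\le' b$ as $a=\phi_u(p)$, $b=\phi_u(q)$ with $p\le q$, the condition $b\in\dom{\phi_t}$ is equivalent to $q\in\dom{\phi_{tu}}$, whence $p\in\dom{\phi_{tu}}\cap Y$ by the auxiliary claim, i.e. $a\in\dom{\phi_t}$. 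Finally, $Y$ is an order ideal of $(X,\le')$, since $a\le' y\in Y$, witnessed by $a=\phi_t(p)$, $y=\phi_t(q)$ with $p\le q$, forces $q\in\dom{\tau_t}$ and hence $p\in\dom{\tau_t}$, so $a=\tau_t(p)\in Y$; combined with the fact that $\le'$ restricts to $\le$ on $Y$, this immediately yields that $(\io(Y),\le')$ is a meet subsemilattice of $(X,\le')$ order-isomorphic to $(Y,\le)$.
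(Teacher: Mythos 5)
Your construction of $\le'$, the proofs of reflexivity, antisymmetry and transitivity, the auxiliary claim that $\dom{\phi_v}\cap Y$ is an order ideal of $(Y,\le)$, and the deductions that each $\dom{\phi_t}$ is an order ideal of $(X,\le')$, that $\phi_t$ is an order isomorphism, and that $\io(Y)$ is an order ideal of $(X,\le')$ are all correct. Indeed, your auxiliary claim is a nice deviation from the paper: the paper establishes that $\dom{\phi_t}$ is an order ideal of $(X,\le')$ by invoking strictness (monotonicity of $\af$, leading to $\phi_{\af(y_1)}\le\phi_{(tu)\m tu}$), whereas your argument for that step uses only $X=TY$, globality of $\phi$, and the fact that $\tau_{v\m w}\in\Sg Y$.

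There is, however, a genuine gap at the assertion ``so $\le'$ restricts to $\le$ on $Y$''. What you actually prove is only one inclusion: a witnessing chart turns $y\le' z$ into $y\le z$. The reverse direction --- given $y\le z$ in $(Y,\le)$, produce a \emph{single} chart $\phi_t$ together with comparable preimages in $\dom{\phi_t}\cap Y$ exhibiting $y\le' z$ --- is never addressed, and it does not follow from reflexivity or from anything else you establish (there is no identity element of $T$ to serve as a chart). Yet this is exactly what the final assertions require: without it the identity map $(Y,\le)\to(\io(Y),\le')$ need not be order-preserving, so neither the isomorphism with $(Y,\le)$ nor the claim that $\io(Y)$ is a \emph{meet} subsemilattice of $(X,\le')$ (i.e. that $y_1\mt y_2$ is the greatest $\le'$-lower bound of $y_1,y_2$ in $X$) is justified. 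Tellingly, this missing direction is precisely where the paper's proof uses strictness of $\tau$, a hypothesis your argument never touches. The repair is short: given $y\le z$, strictness gives $z\in\dom{\tau_{\af(z)}}$, and since $\dom{\tau_{\af(z)}}$ is an order ideal of $Y$, also $y\in\dom{\tau_{\af(z)}}$; as $\af(z)\in E(T)$ and $\phi$ is a homomorphism, $\phi_{\af(z)}$ is a partial identity, so $y=\phi_{\af(z)}(y)$ and $z=\phi_{\af(z)}(z)$ with $y,z\in\dom{\phi_{\af(z)}}\cap Y$, whence $y\le' z$. (Alternatively, in the spirit of your auxiliary claim one can avoid strictness: write $z=\phi_w(r)$ with $r\in\dom{\phi_w}\cap Y$, so that $z\in\ran{\tau_w}\sst\dom{\tau_{ww\m}}$, and run the same argument with the idempotent $ww\m$.) With this inclusion supplied, your remaining deductions, including the meet-subsemilattice claim, which also uses your proof that $\io(Y)$ is an order ideal, go through.
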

\begin{proof}
	As in the proof of Lemma~\ref{glob-with-T-iota(Y)=X} we assume for simplicity that $Y\sst X$ and $\io:Y\to X$ is the inclusion map. Define the relation $\le'$ on $X$ as follows
	\begin{align}\label{x_1-le-x_2-in-X}
	x_1\le' x_2&\iff\exists t\in T\mbox{ and }\exists y_1,y_2\in\dom{\phi_t}\cap Y\mbox{ such that }\notag\\
	&x_1=\phi_t(y_1),\ x_2=\phi_t(y_2)\mbox{ and }y_1\le y_2.
	\end{align}
	Reflexivity of $\le'$ is explained by the fact that $TY=X$. For anti-symmetry suppose that $x_1\le' x_2$ and $x_2\le' x_1$, i.e. $x_1=\phi_t(y_1)=\phi_u(z_1)$ and $x_2=\phi_t(y_2)=\phi_u(z_2)$ for some $t,u\in T$ and $y_1,y_2\in\dom{\phi_t}\cap Y$, $z_1,z_2\in \dom{\phi_u}\cap Y$, such that $y_1\le y_2$ and $z_2\le z_1$. Then $z_1=\phi\m_u(\phi_t(y_1))=\phi_{u\m t}(y_1)=\tau_{u\m t}(y_1)$ and similarly $z_2=\tau_{u\m t}(y_2)$. Since $y_1\le y_2$ and $\tau_{u\m t}$ is order-preserving, we conclude that $z_1\le z_2$, whence $z_1=z_2$ by anti-symmetry of $\le$, and thus $x_1=x_2$. For transitivity of $\le'$ take $x_1\le' x_2$ and $x_2\le' x_3$. By~\eqref{x_1-le-x_2-in-X} there are $t,u\in T$ and $y_1,y_2\in\dom{\phi_t}\cap Y$, $z_1,z_2\in \dom{\phi_u}\cap Y$, such that 
	\begin{align}
	x_1&=\phi_t(y_1),\label{x_1-is-phi_t(y_1)}\\
	x_2&=\phi_t(y_2)=\phi_u(z_1),\label{x_2-is-phi_t(y_2)-is-phi_u(z_1)}\\
	x_3&=\phi_u(z_2)\label{x_3-is-phi_u(z_2)}
	\end{align}
	and $y_1\le y_2$, $z_1\le z_2$. It follows from~\eqref{x_2-is-phi_t(y_2)-is-phi_u(z_1)} that $y_2=\phi_{t\m u}(z_1)=\tau_{t\m u}(z_1)$. Since $\ran{\tau_{t\m u}}$ is an ideal of $Y$ and $y_1\le y_2$, one has that $y_1=\tau_{t\m u}(z_0)$ for some $z_0\in\dom{\tau_{t\m u}}$. Applying $\tau\m_{t\m u}$ to the inequality $y_1\le y_2$, we obtain $z_0\le z_1$, whence $z_0\le z_2$ by transitivity of $\le$. Using~\eqref{x_1-is-phi_t(y_1)}, we have $\phi_u(z_0)=\phi_u(\tau\m_{t\m u}(y_1))=\phi_u(\phi\m_{t\m u}(y_1))=\phi_{uu\m}(\phi_t(y_1))=\phi_{uu\m}(x_1)=x_1$. Then $x_1\le' x_3$ in view of~\eqref{x_3-is-phi_u(z_2)}, which completes the proof of transitivity of $\le'$, and thus $\le'$ is a partial order.
	
	Suppose that $x_2\in\dom{\phi_t}$ and $x_1\le'x_2$. By~\eqref{x_1-le-x_2-in-X} there are $u\in T$ and $y_1,y_2\in\dom{\phi_u}\cap Y$, such that $x_1=\phi_u(y_1)$, $x_2=\phi_u(y_2)$ and $y_1\le y_2$. It follows that $y_2\in\dom{\phi_{tu}}=\dom{\phi_{(tu)\m tu}}$. Since $\phi_{(tu)\m tu}(y_2)=y_2\in Y$, we have that $y_2\in\dom{\tau_{(tu)\m tu}}$. Then $\af(y_2)\le (tu)\m tu$. But $\af$, being a homomorphism of meet semilattices $Y\to E(T)$, is order-preserving, so $\af(y_1)\le\af(y_2)$ in $E(T)$, and by transitivity $\af(y_1)\le(tu)\m tu$. Therefore, $\phi_{\af(y_1)}\le\phi_{(tu)\m tu}=\phi\m_{tu}\phi_{tu}$. As $y_1\in\dom{\tau_{\af(y_1)}}\sst\dom{\phi_{\af(y_1)}}$, we conclude that $y_1\in\dom{(\phi\m_{tu}\phi_{tu})}=\dom{\phi_{tu}}=\dom{(\phi_t\phi_u)}$. The latter implies that $x_1=\phi_u(y)\in\dom{\phi_t}$, so that $\dom{\phi_t}$ is an order ideal of $(X,\le')$. Notice also that in this case $\phi_t(x_1)=\phi_{tu}(y_1)$ and $\phi_t(x_2)=\phi_{tu}(y_2)$. Hence $\phi_t(x_1)\le'\phi_t(x_2)$, and $\phi_t$ is an order homomorphism $\dom{\phi_t}\to\ran{\phi_t}$. Since the same holds for $\phi_{t\m}=\phi\m_t$, the bijection $\phi_t$ is an order isomorphism between order ideals of $(X,\le')$.
	
	Let $y_1,y_2\in Y$ with $y_1\le y_2$. Since $y_1=\tau_{\af(y_1)}(y_1)$, $y_2=\tau_{\af(y_2)}(y_2)$ and $\ran{\tau_{\af(y_1)}}$, $\ran{\tau_{\af(y_2)}}$ are ideals of $Y$, we have that $y_1,y_2\in\ran{\tau_{\af(y_1)}}\cap\ran{\tau_{\af(y_2)}}\sst\ran{\phi_{\af(y_1)}}\cap\ran{\phi_{\af(y_2)}}=\ran{\phi_{\af(y_1\mt y_2)}}$, whence $y_1=\phi_{\af(y_1\mt y_2)}(y_1)$ and $y_2=\phi_{\af(y_1\mt y_2)}(y_2)$. Therefore, $y_1\le' y_2$. Conversely, suppose that $y_1,y_2\in Y$ with $y_1\le' y_2$. By~\eqref{x_1-le-x_2-in-X} there are $t\in T$ and $z_1,z_2\in Y\cap\dom{\phi_t}$, such that $y_1=\phi_t(z_1)$, $y_2=\phi_t(z_2)$ and $z_1\le z_2$. Observe that $y_1=\tau_t(z_1)$, $y_2=\tau_t(z_2)$, and consequently $y_1\le y_2$, as $\tau_t$ is order-preserving. This shows that the intersection of $\le'$ with $Y\times Y$ coincides with $\le$. In particular, $(Y,\le')=(Y,\le)$ is a meet subsemilattice of $(X,\le')$.
	
	We now prove that $Y$ is an ideal of $X$. Suppose that $y\in Y$ and $x\le' y$ for some $x\in X$. There are $t\in T$ and $y_1,y_2\in Y\cap\dom{\phi_t}$ with $y_1\le y_2$, such that $x=\phi_t(y_1)$ and $y=\phi_t(y_2)$. Then $y_2=\tau\m_t(y)$, and since $\dom{\tau_t}$ is an ideal of $Y$, $y_1\in \dom{\tau_t}$, so that $x=\tau_t(y_1)\in Y$.
\end{proof}

\begin{prop}\label{from-part-act-to-L-triple}
	Let $\tau$ be a globalizable strict partial action of an inverse semigroup $T$ on a semilattice $Y$ such that $\dom{\tau_t}\ne\emptyset$ for all $t\in T$. Then there exists a globalization $\phi:T\to \cI X$, $\io:Y\to X$ of $\tau$ such that $(T,X,\io(Y))$ is a strict $L$-triple. Moreover, in this case $Y\rt_\tau T\cong L(T,X,\io(Y))$ and $Y\rt_\tau^m T\cong L_m(T,X,\io(Y))$. In particular, if $\tau$ is fully strict, then $\dom{\tau_t}\ne\emptyset$ is automatically satisfied for all $t\in T$, and in this case $(T,X,\io(Y))$ is also fully strict.
\end{prop}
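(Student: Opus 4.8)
The plan is to assemble the three preceding results, so that the proof of the proposition becomes essentially bookkeeping. First I would apply Lemma~\ref{glob-with-T-iota(Y)=X} to obtain a globalization $\phi:T\to\cI X$, $\io:Y\to X$ of $\tau$ satisfying $T\io(Y)=X$, and then Lemma~\ref{part-order-on-TY} to equip $X$ with a partial order $\le'$ for which every $\phi_t$ is an order isomorphism between order ideals of $(X,\le')$, and for which $\io(Y)$ is simultaneously an order ideal and a meet subsemilattice of $(X,\le')$ isomorphic to $(Y,\le)$. As in the proofs of those lemmas it is convenient to identify $Y$ with $\io(Y)$, so that $\io$ becomes the inclusion map and $\tau$ is literally the restriction of $\phi$ to $Y$ in the sense of~\eqref{dom-restr-phi_s}.

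With this data in hand, I would check the three axioms of an $L$-triple for $(T,X,\io(Y))$. The order-ideal and meet-subsemilattice part of axiom~(i), together with the whole of axiom~(ii) save for non-emptiness (the action is global because $\phi$ is a globalization, and each $\phi_t$ is an order isomorphism between order ideals), is furnished directly by Lemma~\ref{part-order-on-TY}; axiom~(iii), $X=T\io(Y)$, is exactly the conclusion of Lemma~\ref{glob-with-T-iota(Y)=X}. The only genuinely new point, and the step I expect to be the main obstacle at the level of the proposition, is the remaining half of axiom~(i), that $X$ is down-directed, along with the non-emptiness of the ideals $\dom{\phi_t}$. Here I would invoke the equivalence~\eqref{X-down-dir<=>dom-tau_t-non-empty}: its hypotheses hold in the present setting (a global action by order isomorphisms of order ideals, $\io(Y)$ an order ideal, and $T\io(Y)=X$), and its right-hand side $\forall t\in T:\phi_t(\dom{\phi_t}\cap\io(Y))\cap\io(Y)\ne\emptyset$ is, by~\eqref{dom-restr-phi_s}, the same as $\forall t\in T:\ran{\tau_t}\ne\emptyset$, equivalently $\forall t\in T:\dom{\tau_t}\ne\emptyset$, which is precisely our hypothesis. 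Hence the left-hand side yields both that $X$ is down-directed and that $\dom{\phi_t}\ne\emptyset$ for all $t$, completing axioms~(i) and~(ii). Thus $(T,X,\io(Y))$ is an $L$-triple.

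It then remains to read off the rest from Proposition~\ref{from-L-triple-to-part-act}, applied to this $L$-triple, whose restriction to $\io(Y)$ coincides with $\tau$ under the isomorphism $\io$. Since $\tau$ is strict by assumption, that proposition gives that $(T,X,\io(Y))$ is strict and that $Y\rt_\tau T\cong L(T,X,\io(Y))$ and $Y\rt^m_\tau T\cong L_m(T,X,\io(Y))$, the isomorphisms being induced by $\io$ (and becoming equalities under the identification made above). For the final assertion, suppose $\tau$ is fully strict, so that $\pi:Y\rt^m_\tau T\to T$ is surjective. Then for every $t\in T$ there is a pair $(e,t)\in Y\rt^m_\tau T\sst Y\rt_\tau T$, whence $e\in\ran{\tau_t}$ by~\eqref{E-rt-S-defn}; thus $\ran{\tau_t}\ne\emptyset$ and, since $\tau_t$ is a bijection onto its range, also $\dom{\tau_t}\ne\emptyset$. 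Consequently the hypothesis $\dom{\tau_t}\ne\emptyset$ holds automatically, the construction above applies, and the fully strict case of Proposition~\ref{from-L-triple-to-part-act} shows that the strict $L$-triple $(T,X,\io(Y))$ is in fact fully strict.
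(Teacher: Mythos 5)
Your proposal is correct and follows essentially the same route as the paper: Lemma~\ref{glob-with-T-iota(Y)=X} plus Lemma~\ref{part-order-on-TY} plus the equivalence~\eqref{X-down-dir<=>dom-tau_t-non-empty} to get the $L$-triple, then the content of Proposition~\ref{from-L-triple-to-part-act} for strictness and the identifications $Y\rt_\tau T\cong L(T,X,\io(Y))$, $Y\rt^m_\tau T\cong L_m(T,X,\io(Y))$, and finally the same non-emptiness argument ($(y,t)\in Y\rt^m_\tau T$ gives $\dom{\tau_t}=\ran{\tau_{t\m}}\ne\emptyset$) in the fully strict case. The only cosmetic difference is that you cite Proposition~\ref{from-L-triple-to-part-act} as a black box for strictness and the isomorphisms, where the paper verifies these directly (noting $e=\af\circ\io\m|_{\io(Y)}$ and the identification $(y,t)\mapsto(\io(y),t)$), which is the same underlying argument.
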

\begin{proof}
	Existence of a pair $\phi:T\to \cI X$, $\io:Y\to X$, such that $(T,X,\io(Y))$ is an $L$-triple, follows from Lemmas~\ref{glob-with-T-iota(Y)=X} and~\ref{part-order-on-TY} and the observation at the beginning of the proof of Proposition~\ref{from-L-triple-to-part-act} (see~\eqref{X-down-dir<=>dom-tau_t-non-empty}). The strictness of $(T,X,\io(Y))$ is guaranteed by the strictness of $\tau$: the corresponding homomorphism $e:\io(Y)\to E(T)$ is just the (usual) composition of $\af$ with $\bar\io\m$. The isomorphism $Y\rt_\tau T\cong L(T,X,\io(Y))$ is simply the identification of $(y,t)\in Y\rt_\tau T$ with $(\io(y),t)\in L(T,X,\io(Y))$, which restricts to the isomorphism $Y\rt_\tau^m T\cong L_m(T,X,\io(Y))$. Finally, if $\tau$ is fully strict, then for every $t\in T$ there exists $y\in Y$, such that $(y,t)\in Y\rt_\tau^m T$. In particular, $y\in\ran{\tau_t}$, so that $\dom{\tau_t}=\ran{\tau_{t\m}}\ne\emptyset$ for all $t\in T$. The full strictness of $(T,X,\io(Y))$, whenever $\tau$ is fully strict, is immediate.
\end{proof}

Propositions~\ref{from-L-triple-to-part-act} and~\ref{from-part-act-to-L-triple} permit us to conclude that O'Carroll's~\cite[Theorem 4]{O'Carroll77} is equivalent to the version of our Theorem~\ref{S-embeds-into-L-precise} where the partial action $\tau$ is additionally globalizable. The latter property, however, does not always hold for a partial action, as the following example shows.
\begin{exm}\label{non-global-tl-dt}
	Let $S$ be the meet semilattice $\{0,e,f\}$, where $0$ is the minimum element and $e\mt f=0$. Consider the least congruence $\rho$ on $S$ which contains the pair $(0,e)$. Then $\rho$ is idempotent pure and the partial action $\tl\dt$ of $S/\rho$ on $E(S)$  is not globalizable.
\end{exm}
\begin{proof}
	Indeed, the fact that $\rho$ is idempotent pure is trivial, since $S=E(S)$. Notice that the $\rho$-classes are $[e]=\{e,0\}$ and $[f]=\{f\}$, so that $[e]\le[f]$. Now,
	\begin{align*}
	\tl\dt_{[e]}&=\dt_e\jn\dt_0=\id_{eE(S)}\jn\id_{0E(S)}=\id_{\{0,e\}}\jn\id_{\{0\}}=\id_{\{0,e\}},\\
	\tl\dt_{[f]}&=\dt_f=\id_{fE(S)}=\id_{\{0,f\}}.
	\end{align*}
	Since $\tl\dt_{[e]}\not\le\tl\dt_{[f]}$, the premorphism $\tl\dt$ is not order-preserving. Thus, it is not globalizable as a partial action.
\end{proof}

\section*{Acknowledgements}
	The author thanks the referee for the very detailed reading of the manuscript and numerous useful suggestions which permitted to simplify and shorten the proofs. In particular, the use of~\cite[Proposition 1.2.1]{Lawson} in the proofs of Lemmas~\ref{corr-of-tau-tilde} and~\ref{tilde-tau-pact}, as well as the use of the order-preserving property of $\tau_t$ in the proof of Lemma~\ref{L(E-tau-S)-mod-tilde-rho-isom} are due to the referee. Section~\ref{glob-sec} also arose from referee's comments.

	\bibliography{bibl-pact}{}
	\bibliographystyle{acm}
\end{document}